\documentclass[11pt,a4paper]{article}

\topmargin  = 0 in \oddsidemargin = 0 in
\setlength{\textheight}{9.2in} \setlength{\textwidth}{6.6in}
\setlength{\unitlength}{1.0 mm}

\usepackage{mathrsfs}
\usepackage{amsmath}
\usepackage{amsfonts}
\usepackage{amssymb}
\usepackage{graphicx}
\usepackage{enumerate, color}
\usepackage{makeidx,epsf,psfrag,epsfig,graphics}
\usepackage{subfigure}

\allowdisplaybreaks

\begin{document}

\newtheorem{theorem}{Theorem}[section]
\newtheorem{corollary}[theorem]{Corollary}
\newtheorem{definition}[theorem]{Definition}
\newtheorem{conjecture}[theorem]{Conjecture}
\newtheorem{question}[theorem]{Question}
\newtheorem{lemma}[theorem]{Lemma}
\newtheorem{property}[theorem]{Property}
\newtheorem{proposition}[theorem]{Proposition}
\newtheorem{quest}[theorem]{Question}
\newtheorem{example}[theorem]{Example}
\newenvironment{proof}{\noindent {\bf
Proof.}}{\rule{2.5mm}{2.5mm}\par\medskip}
\newcommand{\de}{\em}
\newcommand{\ec}{{\rm ecc}}
\newcommand{\aec}{{\rm aecc}}
\newcommand{\sk}[1]{{\color{blue}#1}}

\title{ On the average Steiner 3-eccentricity of trees\thanks{Supported by the National Natural Science Foundation of China (No. 11861019), Guizhou Talent Development Project in Science and Technology (No. KY[2018]046), Natural Science Foundation of Guizhou (Nos.[2019]1047, [2018]5774-006, [2018]5774-021, [2020]1Z001), Foundation of Guizhou University of Finance and Economics(No. 2019XJC04). Sandi Klav\v zar acknowledges the financial support from the Slovenian Research Agency (research core funding No.\ P1-0297 and projects J1-9109, J1-1693, N1-0095, N1-0108. Corresponding author: Sandi Klav\v{z}ar.}}

\author{Xingfu Li$^a$, Guihai Yu$^a$, Sandi Klav\v{z}ar$^{b,c,d}$\\ \\
{\small  $^a$ College of Big Data Statistics, Guizhou University of Finance and Economics}\\
 {\small Guiyang, Guizhou, 550025, China}\\
{\small  {\tt xingfulisdu@qq.com; yuguihai@mail.gufe.edu.cn}}\\
{\small  $^b$ Faculty of Mathematics and Physics, University of Ljubljana, Slovenia}\\
{\small $^{c}$ Institute of Mathematics, Physics and Mechanics, Ljubljana, Slovenia} \\
{\small $^{d}$ Faculty of Natural Sciences and Mathematics, University of Maribor, Slovenia}\\
{\small {\tt sandi.klavzar@fmf.uni-lj.si}}
}

\maketitle

\begin{abstract}
The \emph{Steiner $k$-eccentricity} of a vertex $v$ of a graph $G$ is the maximum Steiner distance over all $k$-subsets of $V(G)$ which contain $v$. In this paper Steiner $3$-eccentricity is studied on trees. Some general properties of the  Steiner $3$-eccentricity of trees are given.  A tree transformation which does not increase the average Steiner $3$-eccentricity is given. As its application, several lower and upper bounds for the average Steiner $3$-eccentricity of trees are derived.
\end{abstract}

\medskip\noindent
\textbf{Keywords:} Steiner distance, Steiner tree, Steiner eccentricity, average Steiner eccentricity

\medskip\noindent
\textbf{AMS Math.\ Subj.\ Class.\ (2010)}: 05C12, 05C05

\section{Introduction}

Throughout this paper, all graphs considered are simple and connected. If $G = (V(G), E(G))$ is a graph, then its order and size will be denoted by $n(G)$ and $m(G)$, respectively.
If $S\subseteq V(G)$, then the {\em Steiner distance} $d_G(S)$ of $S$ is the minimum size among all connected subgraphs of $G$ containing $S$, that is,
$$d_G(S) = \min\{ m(T):\ T\ {\rm subtree\ of}\ G\ {\rm with}\ S\subseteq V(T)\}\,.$$
If $k\ge 2$ is an integer and $v\in V(G)$, then the \emph{Steiner $k$-eccentricity} $\ec_{k}(v,G)$ of $v$ in $G$ is the maximum Steiner distance over all $k$-subsets of $V(G)$ which contain $v$, that is,
$$\ec_{k}(v,G) = \max\{d_{G}(S): \ v\in S\subseteq V(G), |S|=k\}\,.$$
Note that $\ec_{2}(v,G)$ is the standard eccentricity of the vertex $v$, that is, the largest distance between $v$ and the other vertices of $G$.

Li, Mao, and Gutman~\cite{Li2016Wiener} proposed the {\em $k$-th Steiner Wiener index} $SW_{k}(G)$ of $G$ as
$$SW_{k}(G) =  \sum_{S\in \binom{V(G)}{k}} d_{G}(S)\,.$$
Note that  $SW_{2}(G) = W(G)$, the celebrated Wiener index of $G$. Motivated by the $k$-th Steiner Wiener index, we introduce the {\em average Steiner $k$-eccentricity} $\aec_{k}(G)$ of $G$ as the mean value of all vertices' Steiner $k$-eccentricities in $G$, that is,
$$\aec_{k}(G) = \frac{1}{n(G)} \sum_{v\in V(G)} \ec_{k}(v,G)\,.$$
In this notation, $\aec_{2}(G)$ is just the standard average eccentricity of $G$, cf.~\cite{casablanca-2019, dankelmann2004, dankelmann2014, dankelmann2019, pei2019}).

The Steiner tree problem on general graphs is NP-hard to solve~\cite{Garey1979,Hwang1992},  but it can be solved in polynomial time on trees~\cite{Beineke1996}. The Steiner distance has been extensively studied on special graph classes such as trees, joins, standard graph products, corona products, and others, see~\cite{Anand2012, Chartrand1989, Gologranc, mao-2018, Wang2017}. The average Steiner $k$-distance and its close companion the $k$-th Steiner Wiener index have been studied on trees, complete graphs, paths, cycles, complete bipartite graphs, and others, see~\cite{Dankelmann1996, GutmanInPress}. The average Steiner distance and the Steiner Wiener index were also extensively studied, see~\cite{Dankelmann1997, Li2016Wiener, Li2018Wiener, lu, Wang2018, zhang-2019}. Some work on the Steiner diameter is present in~\cite{mao-2018, Wang2017}. Other topological indices related to the Steiner distance have also been investigated: Steiner Gutman index in~\cite{mao-2018b}, Steiner degree distance in~\cite{Gutman2016}, Steiner hyper-Wiener index in~\cite{Tratnik2019}, multi-center Wiener index in~\cite{Gutman2015}, Steiner Harary index in~\cite{mao-2017}, and Steiner (revised) Szeged index in~\cite{ghorbani}. Y.~Mao wrote an extensive survey paper on the Steiner distance in graphs~\cite{Mao2017Survey}.

In this paper we focus on the average Steiner $3$-eccentricity of trees. In the rest of this section we list additional definitions  needed in this paper. Then, in Section~\ref{sec:preliminary-results}, we present several structural properties of the Steiner $k$-eccentricity of trees and discuss the complexity of computing the average Steiner $3$-eccentricity of trees. In Section~\ref{sec:trans}, the average Steiner $3$-eccentricity of trees is investigated under a special transformation. Section \ref{trans-inform} presents the existence and a extremal graph of the $\pi$-transformation.  Relying on this behavior and the properties, in the subsequent section we establish several lower and upper bonds on the average Steiner $3$-eccentricity of trees. We conclude by presenting several topics for future research.

A vertex of a graph of degree $1$ is a \emph{leaf} or a \emph{pendent vertex}, and if it is of degree at least $2$, then it is an \emph{internal vertex}. With $\ell(G)$ we denote the number of leaves of a graph $G$. A vertex of a tree  of degree at least $3$ is a \emph{branching vertex}. An edge is \emph{pendent} if it is incident to a pendent vertex in a graph. A path $P$ of a graph $G$ is a \emph{pendent path} if  one endpoint of $P$ has degree $1$ and each internal vertex of $P$ has degree $2$.

If $H_1$ and $H_2$ are subgraphs of $G$, then the distance $d_G(H_1,H_2)$ between $H_1$ and $H_2$ is defined as $\min\{d_G(h_1, h_2):\ h_1\in V(H_1), h_2\in V(H_2)\}$. In particular, if $H_1$ is the one vertex graph with $u$ being its unique vertex, then we will write $d_G(u,H_2)$ for $d_G(H_1,H_2)$. The {\em eccentricity of a subgraph} $H$ in $G$ is $\ec_G(H) = \max\{d_{G}(v,H):\ v\in V(G)\}$.

If $S\subseteq V(G)$ and $T$ is subtree of $G$ with $S\subseteq V(T)$ and $m(T)  = d_G(S)$, then we say that $T$ is an {\em $S$-Steiner tree} and that a vertex of $S$ is a {\em terminal} of $T$. If $k\ge 2$ and $v\in V(G)$, then a $k$-set $S\subseteq V(G)$ is a \emph{Steiner $k$-ecc $v$-set} (or {\em $k$-ecc $v$-set} for short) if $v\in S$ and $d_{G}(S)=\ec_{k}(v,G)$; a corresponding tree that realizes $\ec_{k}(v,G)$ will be called a \emph{Steiner $k$-ecc $v$-tree} (or {\em $k$-ecc $v$-tree} for short). A vertex $v$ may have more than one $k$-ecc $v$-set, and each such set may have more than one Steiner $k$-ecc $v$-tree.

\section{Preliminary results}
\label{sec:preliminary-results}

The main topic of this paper is the average Steiner $3$-eccentricity (of trees). We first give exact values of it for some classes of graphs, easy computations being omitted.

\begin{proposition}
\label{aecc-on-special-graphs}
If $n\geq 3$, then $\aec_{3}(K_{n})=2$, $\aec_{3}(P_{n})=n-1$, $\aec_{3}(K_{1,n-1}) = 3-\frac{1}{n}$, and $\aec_{3}(C_{n})=\lceil\frac{3n}{4}\rceil$. Moreover, if $m, n\ge 3$, then $\aec_{3}(K_{m,n})=3$.
\end{proposition}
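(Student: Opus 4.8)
The plan is to treat the five families one at a time, exploiting that each of them is vertex-transitive (the star $K_{1,n-1}$ has two vertex orbits, centre and leaves), so that it suffices to identify $\ec_{3}(v,G)$ for a representative vertex of each orbit: for such a $v$ I will exhibit a $3$-set $S\ni v$ realizing the claimed value and argue that no $3$-set containing $v$ does better, and then average. Two elementary observations will be used throughout. First, on a path the Steiner distance of any vertex subset equals the distance between its two extreme vertices. Second, for every graph $G$ and every $3$-set $S$ we have $d_{G}(S)\ge 2$, with equality precisely when the three vertices of $S$ lie on a common path of length $2$ in $G$.

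For $K_{n}$ the second observation settles everything: any three vertices lie on a path of length $2$, so $d_{K_{n}}(S)=2$ for all $S$, hence $\ec_{3}(v,K_{n})=2$ for every $v$ and $\aec_{3}(K_{n})=2$. For $K_{m,n}$ with $m,n\ge 3$ I distinguish whether the three vertices of $S$ lie in one part. If they do, no path of length $2$ joins them, while the three edges from any vertex of the other part to them form an $S$-Steiner tree, so $d_{K_{m,n}}(S)=3$; otherwise $S$ meets both parts, its three vertices lie on a path of length $2$, and $d_{K_{m,n}}(S)=2$. Since each part has at least three vertices, every vertex $v$ lies in a $3$-set contained in its own part, whence $\ec_{3}(v,K_{m,n})=3$ and $\aec_{3}(K_{m,n})=3$.

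For $P_{n}=u_{1}u_{2}\cdots u_{n}$ the first observation gives $d_{P_{n}}(S)=\max S-\min S$ under the natural labelling, which is at most $n-1$ and equals $n-1$ exactly when $u_{1},u_{n}\in S$; as $n\ge 3$, every vertex lies in such a $3$-set, so $\ec_{3}(v,P_{n})=n-1$ for all $v$ and $\aec_{3}(P_{n})=n-1$. For the star $K_{1,n-1}$ there are two orbits: a $3$-set containing the centre consists of the centre and two leaves and has Steiner distance $2$, whereas a $3$-set of three leaves has Steiner distance $3$ (its Steiner tree is the union of the three corresponding pendent edges), and such a set exists once there are at least three leaves. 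Hence $\ec_{3}(\mathrm{centre})=2$ and $\ec_{3}(\mathrm{leaf})=3$, so
$$\aec_{3}(K_{1,n-1})=\frac{1}{n}\bigl(2+3(n-1)\bigr)=3-\frac{1}{n}\,.$$

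The cycle $C_{n}$ is where the real work lies, and I expect it to be the main obstacle. The key structural remark is that any three vertices of $C_{n}$ split the cycle into three arcs of lengths $a,b,c$ with $a+b+c=n$, and a connected subgraph of $C_{n}$ containing all three is precisely a subpath obtained by deleting the interior of one of these arcs; consequently $d_{C_{n}}(S)=n-\max\{a,b,c\}$. Therefore $\ec_{3}(v,C_{n})$ is obtained by placing the two companions of $v$ so as to make the longest arc as short as possible, i.e.\ by balancing the three arcs as evenly as $n$ permits; a routine optimization over these positions (with the expected case split on the residue of $n$ modulo $3$), together with vertex-transitivity making $\ec_{3}(v,C_{n})$ independent of $v$, then yields the value of $\aec_{3}(C_{n})$. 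The delicate points are to confirm that the balanced placement is optimal and attainable by distinct vertices and to dispose of the few small values of $n$; everything else reduces to the two elementary observations above.
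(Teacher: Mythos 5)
Your treatment of $K_n$, $K_{m,n}$, and $P_n$ is complete and correct, and the paper offers nothing to compare against since it omits the proof as an easy computation. One small point on the star: for $n=3$ the graph $K_{1,2}$ is $P_3$ and has only two leaves, so every $3$-set is the whole vertex set, every Steiner $3$-eccentricity equals $2$, and $\aec_{3}(K_{1,2})=2\neq 3-\tfrac{1}{3}$. Your clause ``once there are at least three leaves'' silently supplies the needed hypothesis $n\ge 4$; you should state explicitly that the formula $3-\tfrac{1}{n}$ fails at $n=3$ rather than leave the reader to notice it.

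The genuine gap is the cycle, precisely the case you postpone as ``a routine optimization.'' Your arc decomposition is correct: every connected proper subgraph of $C_n$ is a path, the cheapest one containing a $3$-set $S$ omits the interior of the longest of the three arcs, and so $d_{C_n}(S)=n-\max\{a,b,c\}$ with $a+b+c=n$, $a,b,c\ge 1$. Since $\ec_{3}(v,C_n)$ is a \emph{maximum} of $d_{C_n}(S)$, you must \emph{minimize} the longest arc, and the balanced placement gives $\min\max\{a,b,c\}=\lceil n/3\rceil$. Carrying out the optimization you deferred therefore yields $\ec_{3}(v,C_n)=n-\lceil n/3\rceil=\lfloor 2n/3\rfloor$ for every $v$, hence $\aec_{3}(C_n)=\lfloor 2n/3\rfloor$, which equals $n-1$ only for $n=3$. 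Already $C_4$ is a counterexample to the stated value: every $3$-subset of $C_4$ has arc lengths $\{1,1,2\}$ and Steiner distance $2$, so $\aec_{3}(C_4)=2\neq 3$. In other words, the step you left unfinished is exactly the step that contradicts the proposition: your (correct) framework refutes $\aec_{3}(C_n)=n-1$ rather than proving it. To close the argument you must either complete the computation and record the corrected value $\lfloor 2n/3\rfloor$, or exhibit an error in the arc decomposition --- and there is none.
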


We now proceed with a series of lemmas.

\begin{lemma}\label{unique-k-ecc-tree}
If $T$ is a tree and $S\subseteq V(T)$, then the $S$-Steiner tree is unique.
\end{lemma}

Lemma~\ref{unique-k-ecc-tree} is implicitly used in the literature and also briefly mentioned in~\cite[p.~11]{Mao2017Survey}. It follows from the argument that two different $S$-Steiner trees would lead to a cycle in $T$. By Lemma~\ref{unique-k-ecc-tree}, the formulation of the next lemma is justified.

\begin{lemma}\label{distance-growing}
Let $T$ be a tree, $v\in V(T)$, and $v\in S\subseteq V(T)$, $|S| = k$. Let $T_{v}$ be the unique $S$-Steiner tree and $P$ a path in $T$, where $V(P)\not = \emptyset$, $V(P)\cap V(T_{v})=\{x\}$, and $x$ is an endpoint of the path $P$. If
\begin{itemize}
    \item[(1)] $x\in S$ and $x\ne v$, or
    \item[(2)] $x\notin S$ and $T_{v}$ has an internal vertex which is in $S$ and is different from $v$,
\end{itemize}
then there exists a $k$-set $S^{'}\not= S$ with $v\in S'$, such that the size of the $S^{'}$-Steiner tree is strictly larger than the size of $T_v$.
\end{lemma}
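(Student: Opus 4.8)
The plan is to treat cases (1) and (2) by the same device: delete one terminal from $S$, replace it by a vertex of $P$ lying off $T_v$, and show the resulting Steiner tree is strictly larger. Fix a vertex $y\in V(P)$ with $y\ne x$ (such a vertex exists precisely when $P$ is a non-trivial path, which is the only situation in which the statement has content). Since $V(P)\cap V(T_v)=\{x\}$ we have $y\notin V(T_v)$, and the path in $T$ from $y$ to $z$ runs through $x$ for every $z\in V(T_v)$: concatenate the $y$-to-$x$ subpath of $P$ with the $x$-to-$z$ path inside $T_v$ and note that these meet only in $x$. I would also rely on two elementary facts about subtrees of a tree, both consequences of Lemma~\ref{unique-k-ecc-tree}: (a) for $\emptyset\ne A\subseteq V(T)$ and $z\in V(T)$, the minimal subtree containing $A\cup\{z\}$ is the minimal subtree $T_A$ of $A$ with the path from $z$ to $T_A$ attached, so $d_T(A\cup\{z\})=d_T(A)+d_T(z,T_A)$; and (b) every leaf of an $S$-Steiner tree lies in $S$, since otherwise deleting that leaf would give a smaller subtree still containing $S$.

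For case (1), where $x\in S$ and $x\ne v$, set $S'=(S\setminus\{x\})\cup\{y\}$. Since $v\ne x$, $v\ne y$ (as $v\in V(T_v)$ while $y\notin V(T_v)$) and $y\notin S$, the set $S'$ is a $k$-set with $v\in S'$ and $S'\ne S$. Let $T_0$ be the minimal subtree containing $S\setminus\{x\}$, so $T_0\subseteq T_v$. By fact (a), $d_T(S)=d_T(S\setminus\{x\})+d_T(x,T_0)$ and $d_T(S')=d_T(S\setminus\{x\})+d_T(y,T_0)$, and the ``runs through $x$'' observation gives $d_T(y,T_0)=d_T(y,x)+d_T(x,T_0)$. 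Subtracting, $d_T(S')-d_T(S)=d_T(y,x)\ge 1$, so the $S'$-Steiner tree is strictly larger than $T_v$.

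For case (2), where $x\notin S$ and $T_v$ has an internal vertex $w\in S$ with $w\ne v$, set $S'=(S\setminus\{w\})\cup\{y\}$; again $v\in S'$, $|S'|=k$ and $S'\ne S$. Let $T_1$ be the minimal subtree containing $S\setminus\{w\}$. The crucial point is $w\in V(T_1)$. Since $w$ is internal in $T_v$, deleting $w$ splits $T_v$ into at least two components, and each such component $C$ contains a leaf of $T_v$: indeed $C$ is joined to $w$ by exactly one edge of $T_v$ (two would create a cycle), so if $|V(C)|=1$ its vertex is a leaf of $T_v$, and if $|V(C)|\ge 2$ then $C$ has at least two leaves of its own, at least one of which is not the neighbour of $w$ and hence is a leaf of $T_v$. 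By fact (b) each such leaf is a terminal different from $w$, and the path in $T$ joining two such terminals in distinct components of $T_v-w$ passes through $w$; hence $w\in V(T_1)$. Now fact (a) gives $d_T(S)=d_T(S\setminus\{w\})+d_T(w,T_1)=d_T(S\setminus\{w\})$, while $d_T(S')=d_T(S\setminus\{w\})+d_T(y,T_1)\ge d_T(S\setminus\{w\})+1$ since $y\notin V(T_v)\supseteq V(T_1)$; so the $S'$-Steiner tree is again strictly larger than $T_v$.

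The only step I expect to require real care is the claim $w\in V(T_1)$ in case (2): one must argue that \emph{every} component of $T_v-w$ supplies a terminal distinct from $w$, which rests on the acyclicity of $T_v$ (so $w$ has exactly one neighbour in each component) together with fact (b). Once that is settled, both cases reduce to routine bookkeeping with the additivity identity $d_T(A\cup\{z\})=d_T(A)+d_T(z,T_A)$. I would also state explicitly at the start the standing assumption that $P$ is non-trivial, since otherwise no admissible $y$ exists and there is nothing to prove.
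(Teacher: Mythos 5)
Your proof is correct and follows the same terminal-swap strategy as the paper: in case (1) it removes $x$ from $S$ and inserts a vertex of $P$ lying off $T_v$, and in case (2) it does the same with the internal terminal. The paper's own argument is terser (it simply asserts that the new Steiner tree is $T_v$ together with the relevant piece of $P$), whereas you justify the two points it leaves implicit — the additivity identity $d_T(A\cup\{z\})=d_T(A)+d_T(z,T_A)$ and the fact that in case (2) the internal terminal $w$ still lies in the Steiner tree of $S\setminus\{w\}$ — so this is a welcome tightening rather than a genuinely different route.
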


\begin{proof}
Suppose first that $x\in S$ and $x\ne v$. Let $u$ be the pendent vertex of $P$ not in $T_{v}$ and set $S^{'} = (S \cup\{u\})-x$. Then  the size of the $S'$-Steiner tree is $|E(T_{v})\cup E(P)|$. Since $|V(P)|\geq 2$, we have $|E(T_{v})\cup E(P)|\geq |E(T_{v})|+1>|E(T_{v})|$.

In the second case, let $t$ be the internal vertex of $T_v$ which is in $S$ and different from $v$. Let again $u$ be the pendent vertex of $P$ not in $T_{v}$. In this case we set $S^{'} = (S \cup\{u\})-t$ and obtain another $k$-set which induces a larger size Steiner tree than the original $k$-set $S$.
\end{proof}

Recall that $\ell(T)$ denotes the number of leaves of a tree $T$.

\begin{lemma}\label{all-leaves-must-be-terminals}
Let $T$ be a tree and $v\in V(T)$. If $k > \ell(T)$, then every $k$-ecc $v$-set contains all the leaves of $T$. The same conclusion holds if $v$ is a leaf and $k=\ell(T)$.
\end{lemma}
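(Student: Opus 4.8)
The plan is a proof by contradiction combining Lemma~\ref{distance-growing}(1) with a counting argument on the leaves of a Steiner tree. Throughout I would use two elementary facts about a tree $T$, both coming from minimality and Lemma~\ref{unique-k-ecc-tree}: every leaf of an $X$-Steiner tree lies in $X$, so if $Y$ is a set of vertices of the $X$-Steiner tree $T_v$ that contains every leaf of $T_v$, then $T_v$ is also the $Y$-Steiner tree; and if $X'\subseteq X$, then the $X$-Steiner tree contains the $X'$-Steiner tree. Suppose now, for a contradiction, that $S$ is a $k$-ecc $v$-set that does not contain some leaf $w$ of $T$, and let $T_v$ be the $S$-Steiner tree. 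Since a leaf of $T$ lying in $V(T_v)$ would be a leaf of $T_v$ and hence a terminal, we get $w\notin V(T_v)$; let $P$ be the path from $w$ to its unique nearest vertex of $T_v$, so that $V(P)\cap V(T_v)$ is a single vertex and $|E(P)|\ge 1$.

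The crucial structural input is that in a $k$-ecc $v$-tree $T_v$, every leaf of $T_v$ is a leaf of $T$ or is $v$ itself: indeed, if $z$ is a leaf of $T_v$ that is internal in $T$, then $z\in S$, a path of $T$ leaves $T_v$ at $z$, and if $z\ne v$ then Lemma~\ref{distance-growing}(1) would produce a $k$-set through $v$ with a strictly larger Steiner tree, contradicting the optimality of $S$. Consequently, since $w$ is a leaf of $T$ outside $V(T_v)$, the tree $T_v$ has at most $\ell(T)-1$ leaves that are leaves of $T$, together with possibly $v$; combining this with $|S|=k$, $S\subseteq V(T_v)$, and the fact that every leaf of $T_v$ belongs to $S$, a short case analysis on whether $v$ is a leaf of $T_v$ — using $k>\ell(T)$, and in the boundary case using that $v$, being a leaf of $T$, is necessarily a leaf of $T_v$ and is already accounted for — shows that $S$ contains a vertex $t\ne v$ that is an \emph{internal} vertex of $T_v$.

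It then remains to swap $t$ for $w$. Put $S'=(S\setminus\{t\})\cup\{w\}$; then $|S'|=k$ and $v\in S'$. Since $t$ is not a leaf of $T_v$, the set $S\setminus\{t\}$ still contains every leaf of $T_v$, so $T_v$ is the $(S\setminus\{t\})$-Steiner tree; by the containment fact the $S'$-Steiner tree contains $T_v$ as well as $w$, hence contains $T_v\cup P$. Therefore $d_T(S')\ge m(T_v)+|E(P)|>m(T_v)=\ec_k(v,T)$, contradicting $d_T(S')\le\ec_k(v,T)$. I expect the main obstacle to be producing the internal vertex $t\ne v$: the crude bound on the number of leaves of $T_v$ must be sharp enough that, even after discarding the single vertex $v$, some terminal of $S$ is forced to be internal in $T_v$, and this is exactly where the hypothesis $k>\ell(T)$ (respectively $v$ a leaf with $k=\ell(T)$) is used — without it $S$ could a priori consist only of leaves of $T_v$ together with $v$. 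A smaller but genuine point is that the final step needs a \emph{lower} bound on $d_T(S')$, which is why one argues through ``the $S'$-Steiner tree contains $T_v$'' rather than merely exhibiting $T_v\cup P$ as some subtree through $S'$.
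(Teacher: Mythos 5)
Your proof is correct, but it follows a genuinely different route from the paper's. The paper's argument is a short direct construction: since $k>\ell(T)$ (or $v$ is a leaf and $k=\ell(T)$), one can exhibit a $k$-set $S$ containing $v$ and all leaves of $T$, whose Steiner tree is then forced to be all of $T$; hence $\ec_k(v,T)=n(T)-1$, every $k$-ecc $v$-tree is $T$ itself, and every leaf of $T$, being a leaf of that Steiner tree, must be a terminal. You instead argue by contradiction: assuming an optimal set $S$ misses a leaf $w$, you show (via Lemma~\ref{distance-growing}(1)) that every leaf of the optimal tree $T_v$ is a leaf of $T$ or equals $v$, then use the pigeonhole count $|S|=k$ versus at most $\ell(T)-1$ leaves of $T$ inside $T_v$ to force a terminal $t\ne v$ that is internal in $T_v$, and finally swap $t$ for $w$ to enlarge the Steiner tree. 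I checked the counting in both regimes (splitting on whether $v$ is a leaf of $T_v$) and the swap step (that $S\setminus\{t\}$ still spans $T_v$, so the $S'$-Steiner tree contains $T_v\cup P$), and everything goes through; your care in obtaining a \emph{lower} bound on $d_T(S')$ is exactly right. What the paper's approach buys is brevity and the stronger intermediate fact that the optimal Steiner tree is all of $T$; what yours buys is a local exchange argument that reuses the machinery of Lemmas~\ref{distance-growing} and~\ref{terminals-must-be-leaves} and does not require guessing the extremal witness set, at the cost of a somewhat delicate case analysis to produce the internal terminal $t\ne v$.
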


\begin{proof}
Trivially, $\ec_{k}(v,T) \leq n(T)-1$. Suppose that $k > \ell(T)$. Set $S = \{v\} \cup L\cup X$, where $L$ is the set of leaves of $T$ and $X$ a set of arbitrary $k-\ell(T) -1$ vertices from $V(T)\setminus (L\cup \{v\})$. Then $|S| = k$ and the  $S$-Steiner tree is the whole tree $T$.  Hence every $k$-ecc $v$-set is the whole tree $T$ and thus contains all the leaves. If $v$ is a leaf, then set  $S = L\cup X$, where $X$ a set of arbitrary $k-\ell(T)$ vertices from $V(T)\setminus L$ to reach the same conclusion.
\end{proof}

\begin{lemma}\label{terminals-must-be-leaves}
Let $T$ be a tree, $v\in V(T)$, and $\ell(T) \ge k\ge 2$. If $S$ is a $k$-ecc $v$-set, then every vertex from $S\setminus \{v\}$ is a leaf of $T$.
\end{lemma}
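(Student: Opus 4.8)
The plan is a proof by contradiction. Suppose $S$ is a $k$-ecc $v$-set with $\ell(T)\ge k\ge2$ but some $w\in S\setminus\{v\}$ is not a leaf of $T$; let $T_v$ be the unique $S$-Steiner tree (Lemma~\ref{unique-k-ecc-tree}), so that $d_T(S)=m(T_v)=\ec_{k}(v,T)$. I will exhibit a $k$-set $S'$ with $v\in S'$ whose Steiner tree is strictly larger than $T_v$, contradicting maximality. First I record three facts: (i) $S\subseteq V(T_v)$; (ii) every leaf of $T_v$ belongs to $S$, for otherwise removing such a leaf leaves a smaller subtree still containing $S$; (iii) $T_v\neq T$, since if $T_v=T$ then by (ii) all leaves of $T$ lie in $S$, and $|S|=k\le\ell(T)$ forces $S$ to be exactly the leaf set of $T$, contradicting the fact that $w\in S$ is not a leaf.

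Case 1: $w$ is a leaf of $T_v$. Since $w$ is not a leaf of $T$, $\deg_T(w)>\deg_{T_v}(w)=1$. Deleting $E(T_v)$ from $T$ splits $T$ into subtrees, each meeting $V(T_v)$ in exactly one vertex (the path in $T$ between two vertices of the subtree $T_v$ stays inside $T_v$). Let $C$ be the subtree containing $w$; then $\deg_C(w)\ge1$, so $C$ has at least two vertices, hence a leaf $u\neq w$, and since $u\notin V(T_v)$ this $u$ is a leaf of $T$. The $w$--$u$ path $P$ in $C$ satisfies $V(P)\cap V(T_v)=\{w\}$, so Lemma~\ref{distance-growing}(1), applied with the vertex $w\in S$ (which is distinct from $v$), produces a $k$-set $S'\neq S$ with $v\in S'$ whose Steiner tree is strictly larger than $T_v$ --- a contradiction.

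Case 2: $w$ is an internal vertex of $T_v$. By (iii) there is a vertex $x\in V(T_v)$ incident in $T$ with an edge not in $T_v$; the construction of Case~1 then yields a leaf $u$ of $T$ with $u\notin V(T_v)$. Put $S'=(S\setminus\{w\})\cup\{u\}$; since $u\notin V(T_v)\supseteq S$ we get $|S'|=k$, and $v\in S'$ because $v\neq w$ and $v\in V(T_v)$ while $u\notin V(T_v)$. As $w$ is internal in $T_v$, fact (ii) shows that every leaf of $T_v$ lies in $S\setminus\{w\}$. The key point is that any connected subgraph of $T$ containing all leaves of $T_v$ must contain every edge of $T_v$: deleting an edge $e$ of $T_v$ separates $T_v$ into two parts, each holding a leaf of $T_v$, so $e$ lies on the path between those leaves. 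Hence the $S'$-Steiner tree contains all of $T_v$ together with at least one edge leading to $u\notin V(T_v)$, so $d_T(S')\ge m(T_v)+1>\ec_{k}(v,T)$, contradicting $v\in S'$ and $|S'|=k$. (If one prefers, the subcase $x\notin S$ can instead be handled directly by Lemma~\ref{distance-growing}(2), but the argument above is uniform.)

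I expect Case~2 to be the main obstacle. When $w$ is a leaf of $T_v$, Lemma~\ref{distance-growing}(1) applies at once; but when $w$ is internal, the path produced by the Case~1 construction is attached to $T_v$ at a vertex $x$ that may lie in $S$ and may even equal $v$, so neither clause of Lemma~\ref{distance-growing} is directly available (clause (1) needs $x\in S\setminus\{v\}$, clause (2) needs $x\notin S$). Replacing $w$ rather than $x$ in the terminal set circumvents this, but then one must argue by hand that the new Steiner tree is larger, which is exactly what the observation ``a subtree hitting all leaves of $T_v$ contains $T_v$'' delivers. The only other routine point is the component structure of $T-E(T_v)$, which makes $u$ an honest leaf of $T$ outside $T_v$; this is where $T_v\neq T$, hence the hypothesis $k\le\ell(T)$, is used.
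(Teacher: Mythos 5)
Your proof is correct and follows the same basic strategy as the paper: assume some terminal $w\in S\setminus\{v\}$ is not a leaf of $T$, locate a leaf of $T$ outside the Steiner tree $T_v$, and swap it into the terminal set to produce a strictly larger Steiner tree, contradicting maximality. Your version is in fact tighter than the paper's: the paper applies Lemma~\ref{distance-growing} to the $x,T_v$-path for an arbitrary leaf $x$ of $T$ outside $T_v$ without checking which clause of that lemma applies (the attachment point may equal $v$, or may lie outside $S$ while the offending terminal $w$ is a leaf of $T_v$, in which case neither clause is verbatim available), whereas your split on whether $w$ is a leaf or an internal vertex of $T_v$ --- attaching the new path at $w$ itself in the first case, and replacing $w$ directly with the observation that a connected subgraph containing all leaves of $T_v$ contains all of $T_v$ in the second --- closes exactly this gap and also absorbs the boundary case $k=\ell(T)$ that the paper handles separately via Lemma~\ref{all-leaves-must-be-terminals}.
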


\begin{proof}
If $k = \ell(T)$ and $v$ is a leaf of $T$, then the conclusion follows by Lemma \ref{all-leaves-must-be-terminals}. In the rest we may hence assume that $k < \ell(T)$ or $v$ is not a leaf of $T$.

Let $T_{v}$ be a $k$-ecc $v$-tree and suppose that there exists a vertex $u\in S\setminus v$ which is an internal vertex of $T$. There there exists a leaf $x$ in $T$ which does not lie in $T_v$. Let $P$ be the unique $x,T_v$-path in $T$. Then $P$ is a pendent path with at least one edge not in $T_{v}$ and hence we can use Lemma~\ref{distance-growing} to obtain a larger $S$-Steiner tree, a contradiction.
\end{proof}

\begin{lemma}\label{longest-shortest-path-preserve}
Let $T$ be a tree and $v\in V(T)$. Then every Steiner $k$-ecc $v$-tree contains a longest path starting at $v$.
\end{lemma}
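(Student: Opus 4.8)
The plan is to argue by contradiction: take a longest path $Q$ in $T$ starting at $v$, with other endpoint $w$ (note $w$ is necessarily a leaf of $T$), and suppose some Steiner $k$-ecc $v$-tree $T_v$ does not contain all of $Q$. Since $v \in V(Q) \cap V(T_v)$ and $Q$ is connected, the vertices of $Q$ lying in $T_v$ form an initial segment $v = q_0, q_1, \dots, q_j$ of $Q$, and because $T_v \not\supseteq Q$ we have $j < \operatorname{length}(Q)$, so there is at least one edge of $Q$ outside $T_v$. Let $P$ be the sub-path of $Q$ from $q_j$ to $w$; then $V(P) \cap V(T_v) = \{q_j\}$ and $P$ has at least one edge.

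First I would treat the degenerate possibility $j = 0$, i.e. $x := q_j = v$. Here $V(P) \cap V(T_v) = \{v\}$, so $T_v$ together with $P$ is still a tree (attaching the path $P$ at $v$ creates no cycle), and replacing any terminal of $S$ other than $v$ — say a leaf $y \in S \setminus \{v\}$, which exists since $|S| = k \ge 2$ — by the leaf $w$ yields a $k$-set $S'$ whose Steiner tree contains $E(T_v) \cup E(P)$, hence has size at least $m(T_v) + 1 > \ec_k(v,T)$, a contradiction. (If $T_v$ happened to be just the single vertex $v$, which can only occur when $\ec_k(v,T) = 0$, the conclusion is trivial anyway, or one argues directly that $\{v,w,\dots\}$ beats it.)

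For the main case $j \ge 1$ I would apply Lemma~\ref{distance-growing} to the path $P$ and the vertex $x = q_j$. If $x \in S$ then, since $x \ne v$, hypothesis~(1) of Lemma~\ref{distance-growing} applies and we immediately get a $k$-set with strictly larger Steiner tree, contradicting that $T_v$ realizes $\ec_k(v,T)$. If $x \notin S$, I need hypothesis~(2): that $T_v$ has an internal vertex lying in $S$ and different from $v$. The key observation here is that $q_{j-1}$ is such a vertex: $q_{j-1} \in V(T_v)$, and $q_{j-1}$ is an internal vertex of $T$ because it lies strictly between the two leaves $v'$ (or $v$, if $v$ is a leaf — and any longest path from $v$ necessarily has $v$ as... ) Actually the cleanest route is: $q_{j-1}$ has degree $\ge 2$ in $T$ since it has the neighbour $q_{j-2}$ (or $v$) on one side and $q_j$ on the other, so $q_{j-1}$ is internal in $T$; and I claim $q_{j-1} \in S$. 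To see the claim, note that every vertex of $S \setminus \{v\}$ is a leaf of $T_v$ — otherwise Lemma~\ref{terminals-must-be-leaves}-style reasoning (or directly Lemma~\ref{distance-growing}(2) via a pendent path to a leaf of $T$ outside $T_v$) gives a contradiction — and $q_j$ is a leaf of $T_v$ (it is the endpoint of the initial segment of $Q$ inside $T_v$, and has no further $T_v$-neighbour along $Q$). Hmm, but $q_j$ being a leaf of $T_v$ does not force $q_j \in S$. So let me instead argue: if neither hypothesis of Lemma~\ref{distance-growing} holds, then $x = q_j \notin S$ and $T_v$ has no internal-vertex terminal other than $v$, i.e. every vertex of $S \setminus \{v\}$ is a leaf of $T_v$, and moreover $x$ is a leaf of $T_v$ with $x \notin S$. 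But then removing the path from $x$ back to the nearest branching-or-terminal vertex of $T_v$ shows $x$ was not needed, contradicting that $m(T_v) = d_T(S)$ is minimum — unless $x$ itself is already that nearest vertex, but a leaf of $T_v$ that is not a terminal can always be pruned, so $T_v$ would not be an $S$-Steiner tree. This contradiction completes the case.

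The main obstacle I anticipate is exactly this last case analysis — ensuring that when $x = q_j \notin S$ we can legitimately invoke Lemma~\ref{distance-growing}(2), which requires producing an internal terminal of $T_v$ distinct from $v$. The clean way around it is the pruning remark: in any $S$-Steiner tree every leaf must be a terminal, so $x \in S$ after all, putting us in case~(1); thus the apparent split collapses and only hypothesis~(1) of Lemma~\ref{distance-growing} is ever actually used, together with the degenerate $j=0$ argument. I would phrase the write-up to make the leaf-must-be-a-terminal observation explicit first, then the whole proof is short.
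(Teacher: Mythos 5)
There is a genuine gap, and it starts at the very first line: you negate the wrong statement. The lemma asserts that every Steiner $k$-ecc $v$-tree contains \emph{some} longest path starting at $v$; its negation is that $T_v$ contains \emph{no} longest path starting at $v$. You instead fix one longest path $Q$ and assume $T_v\not\supseteq Q$, which is the negation of the (false) stronger claim that $T_v$ contains \emph{every} longest path from $v$. A spider with four legs of length $5$ attached to a center $c$, with $v$ the end of leg $1$ and $k=3$, gives a $3$-ecc $v$-tree consisting of legs $1,2,3$; taking $Q$ to be the path from $v$ through $c$ to the end of leg $4$ satisfies all of your hypotheses, yet no contradiction exists. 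Your argument must therefore break somewhere, and it breaks exactly where you feared: when $x=q_j\notin S$ you assert that $q_j$ is a leaf of $T_v$ because it has no further $T_v$-neighbour \emph{along $Q$}, but $T_v$ may branch off $Q$ at $q_j$ (in the example $q_j=c$ has degree $3$ in $T_v$), so the leaf-must-be-a-terminal pruning does not put $q_j$ into $S$, and neither hypothesis of Lemma~\ref{distance-growing} is available. A secondary soft spot is the $j=0$ case: after swapping an arbitrary terminal $y$ for $w$, the $S'$-Steiner tree need not contain $E(T_v)\cup E(P)$, since deleting $y$ from the terminal set may allow the branch of $T_v$ leading to $y$ to be pruned away.

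The paper's proof avoids all of this by using the correct negation. It writes the chosen longest path $P$ as $P_1\cup P_2$ with $P_1=P\cap T_v$ meeting $P_2$ at $v'$, and in the troublesome case (your $x\notin S$ with $T_v$ continuing past $v'$) it picks a leaf $u\in S$ of $T_v$ lying in the component of $T_v-E(P_1)$ containing $v'$, deletes the private branch $P_3$ from $u$ back to the nearest branching vertex, and attaches $P_2$. The hypothesis that $T_v$ contains \emph{no} longest path from $v$ is what forces $d_{T_v}(v,u)<m(P)$, hence $m(P_3)<m(P_2)$ and a strictly larger Steiner tree for the swapped terminal set. Without that hypothesis the inequality can fail (in the spider example the swap is size-neutral), so this step cannot be recovered within your setup; you would need to restructure the proof around the correct contradiction hypothesis.
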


\begin{proof}
If $k=2$, then $\ec_{2}(v,T)$ is the length of a longest path from $v$ to all the other vertices in $T$, so there is nothing to be proved. In the sequel we may thus assume $k\geq 3$. Suppose on the contrary that $T_{v}$ is a $k$-ecc $v$-tree  which contains no longest path starting at $v$ in $T$. Let $S$ be the $k$-ecc $v$-set corresponding to $T_{v}$. Let $P$ be a longest path starting at $v$ in the tree $T$, and let $v^{''}$ be the endpoint of $P$ different from $v$. Let $P_{1}$ be the sub-path of $P$ which is shared by $T_{v}$, and $P_{2}$ be the remaining sub-path of $P$. Then $P_{1}$ and $P_{2}$ share a unique vertex $v^{'}\in V(P)$. The described situation is illustrated in Fig.~\ref{T-1}.

\begin{figure}[ht!]
\begin{center}
\epsfig{file=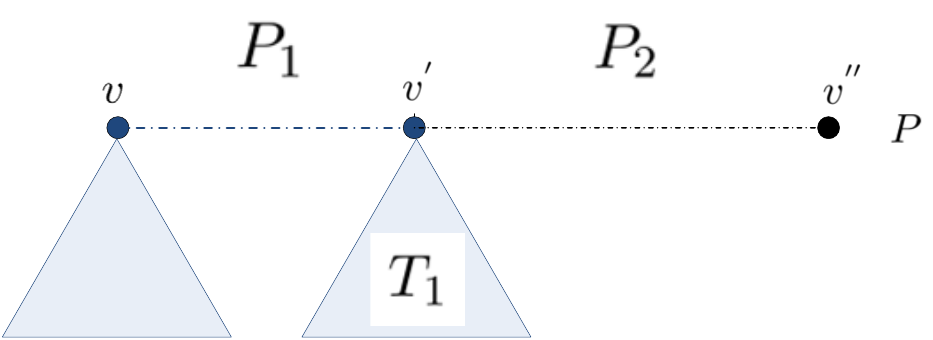, scale=0.5}
\end{center}
\vspace{-0.3cm}
\caption{The situation from the proof of Lemma~\ref{longest-shortest-path-preserve}; the grey part is $T_{v}$}
\label{T-1}
\end{figure}

Note that $v$ is an endpoint of $P_{1}$ and $v^{''}$ is an endpoint of $P_{2}$. By the assumption, $P_{2}$ is not empty. Let $F$ be a forest obtained by deleting all the edges in $E(P_{1})\subseteq E(T_{v})$ from the tree $T_{v}$. Let $T_{1}$ be the tree in $F$ which contains the vertex $v^{'}$, cf.\ Fig.~\ref{T-1} again. We now distinguish two cases.

Suppose first that $n(T_{1}) = 1$. Then $v^{'}$ is a leaf of $T_{v}$. So $v^{'}$ must be in the set $S$. We claim that $v^{'}\not =v$. Otherwise, the tree $T_{v}$ would be a trivial tree and $S$ contains the unique vertex $v$, which contradicts the fact that $k\geq 3$.   Let $S^{'} = (S\setminus\{v^{'}\}) \cup \{v^{''}\}$. Then $S^{'}$ is another $k$-set containing $v$ and its $S^{'}$-Steiner tree is $T_{v}\cup P_{2}$. Since $S^{'}$ is a larger tree than $T_{v}$, we have a contradiction to the fact that $T_{v}$ is a $k$-ecc $v$-tree.

Suppose second that $n(T_{1}) \ge 2$. Then there must be a vertex $u\in V(T_{1})$ such that $u$ is a leaf of $T_{v}$. Then $u$ lies in the $k$-set $S$. We construct a path $P_{3}$ as follows.
\begin{itemize}
\item If there is no branching vertex in $T_{v}$, then set $P_{3}$ to be the path from $v^{'}$ to $u$ in $T_{v}$.
\item Suppose that there is at least one branching vertex in $T_{v}$. Let $w\in V(T_{v})$ be the branching vertex nearest to $u$ in $T_{v}$. If $w$ is on the path from $v^{'}$ to $u$, then let $P_{3}$ be the path from $w$ to $u$ in the tree $T_{v}$. Otherwise, let $P_{3}$ be the path from $v^{'}$ to $u$ in the tree $T_{v}$.
\end{itemize}
Let $S^{'} = (S\setminus\{u\}) \cup \{v^{''}\}$. Then the tree $T_{v}^{'} = (T_{v}\setminus P_{3}) \cup P_{2}$ is the  $S^{'}$-Steiner tree. Since $P$ is a longest starting from $v$ and $T_v$ contains no such longest path from $v$,  the length of $P_{2}$ is strictly larger than the length of $P_{3}$. So $m(T_{v}^{'}) > m(T_{v})$, a final contradiction.
\end{proof}

In the rest of the section we focus on the structure of $3$-ecc $v$-trees. By Lemma~\ref{longest-shortest-path-preserve}, the endpoint $x$ of some longest path starting at $v$ must be in some $3$-ecc $v$-set. Here is now a property of the third terminal in a $3$-ecc $v$-set.

\begin{lemma}\label{third-terminal-property}
Let $v$ be a vertex of a tree $T$ and let $S = \{v,x,y\}$ be a $3$-ecc $v$-set, where the $v,x$-path $P$ is a longest path in $T$ starting from $v$. Then $d_{T}(y,P) = \ec_T(P)$.
\end{lemma}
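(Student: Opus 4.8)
The plan is to combine the explicit description of Steiner trees of three vertices in a tree with a short extremality (swap) argument of the kind used in Lemmas~\ref{distance-growing}--\ref{longest-shortest-path-preserve}. Let $T_v$ be the $S$-Steiner tree, which is unique by Lemma~\ref{unique-k-ecc-tree}, so that $\ec_{3}(v,T)=d_T(S)=m(T_v)$. The first step is to observe that, since the $v,x$-path $P$ lies in $T_v$ by hypothesis (indeed $P$ lies in \emph{every} subtree of $T$ containing $v$ and $x$), the tree $T_v$ is the edge-disjoint union of $P$ and the unique shortest $y$--$P$ path $Q$; writing $m$ for the vertex of $P$ closest to $y$, the path $Q$ runs from $y$ to $m$ and $m$ is the median of $\{v,x,y\}$. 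Hence $\ec_{3}(v,T)=m(T_v)=m(P)+d_T(y,P)$.

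The inequality $d_T(y,P)\le\ec_T(P)$ is immediate from the definition $\ec_T(P)=\max\{d_T(w,P):w\in V(T)\}$. For the reverse inequality I would argue by contradiction: assume $d_T(y,P)<\ec_T(P)$ and choose $w\in V(T)$ with $d_T(w,P)=\ec_T(P)$. Since $d_T(w,P)>d_T(y,P)\ge 0$, the vertex $w$ is off $P$, so $w\notin\{v,x\}$ and $S'=\{v,x,w\}$ is a genuine $3$-set containing $v$. Applying the structural observation of the first step to $S'$ (again $P$ lies in the $S'$-Steiner tree) gives $d_T(S')=m(P)+d_T(w,P)>m(P)+d_T(y,P)=\ec_{3}(v,T)$, which contradicts the maximality in the definition of $\ec_{3}(v,T)$. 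Therefore $d_T(y,P)=\ec_T(P)$.

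The step that needs the most care is the structural claim in the first paragraph, namely that $T_v=P\cup Q$ with $Q$ attached at the vertex of $P$ nearest to $y$. I would justify it using Lemma~\ref{unique-k-ecc-tree}: the subgraph $P\cup Q$ is connected and contains $\{v,x,y\}$, and deleting any of its edges disconnects one of $v,x,y$ from the others, so it is a minimum connected subgraph containing $S$; by uniqueness it equals $T_v$. Everything else is routine, so I expect no real obstacle beyond this bookkeeping with medians in trees.
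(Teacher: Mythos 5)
Your proposal is correct and follows essentially the same route as the paper: the paper's (much terser) proof likewise observes that $T_v$ must contain $P$, so that $m(T_v)=m(P)+d_T(y,P)$ and maximality forces $y$ to be at maximum distance from $P$. Your version simply makes the decomposition $T_v=P\cup Q$ and the exchange step $S'=\{v,x,w\}$ explicit (note only that $P\cup Q$ is contained in \emph{every} connected subgraph of $T$ containing $S$, which gives minimum rather than just minimal directly), which is a welcome tightening but not a different argument.
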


\begin{proof}
Let $T_{v}$ be the $3$-ecc $v$-tree; so $T_{v}$ contains $P$ and the set $S = \{x,y,z\}$. The path $P$ is thus fixed and hence the vertex $y$ must be such that the $y,P$-path in $T$ is as long as possible. But this in turn implies that for the third terminal $y$ we must have $d_{T}(y,P) = \max\{d_{T}(s,P): s\in V(T)\} = \ec_T(P)$.
\end{proof}

Combining with Lemma~\ref{third-terminal-property}, the next lemma asserts that in the case of $3$-ecc $v$-sets, in Lemma~\ref{longest-shortest-path-preserve} an arbitrary longest path starting from $v$ can be used.

\begin{lemma}\label{equal-ecc-for-diff-longest-path-of-same-verte}
Let $v$ be a vertex of a tree $T$, and let $P_{1}$ and $P_{2}$ be distinct longest paths having $v$ as an endpoint. Then $\ec_T(P_{1}) = \ec_T(P_{2})$.
\end{lemma}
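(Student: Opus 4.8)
The plan is to argue directly with distances in $T$, not invoking the earlier structural lemmas. Put $h=\ec_2(v,T)$, so that $P_1$ and $P_2$ both have length $h$; let $x_1$ and $x_2$ be their endpoints different from $v$. The intersection of $P_1$ and $P_2$ is a path having $v$ as an endpoint; denote it $[v,w]$. Since $P_1\neq P_2$ and both have length $h$, neither path contains the other, so $w\notin\{x_1,x_2\}$ and $a:=d_T(w,x_1)=d_T(w,x_2)=h-d_T(v,w)\ge 1$. I will freely use the standard projection fact: for a subtree $H$ of a tree $T$ and a vertex $z$ there is a unique vertex $u\in V(H)$ nearest to $z$ (its \emph{projection} onto $H$), and $d_T(z,z')=d_T(z,u)+d_T(u,z')$ for all $z'\in V(H)$; in particular this $u$ separates $z$ from every other vertex of $H$.

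The first step is the observation $d_T(x_1,P_2)=d_T(x_2,P_1)=a$. Indeed, the subpath of $P_1$ from $w$ to $x_1$ meets $P_2$ only in $w$ (since $P_1\cap P_2=[v,w]$), so $w$ is the projection of $x_1$ onto $P_2$ and $d_T(x_1,P_2)=d_T(x_1,w)=a$; the other equality is symmetric. In particular $\ec_T(P_1)\ge a$ and $\ec_T(P_2)\ge a$.

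By symmetry it suffices to show $\ec_T(P_2)\ge\ec_T(P_1)$. Choose $z\in V(T)$ with $d_T(z,P_1)=\ec_T(P_1)=:e$ and let $u$ be the projection of $z$ onto $P_1$. I would split the argument according to the location of $u$ on $P_1$. If $u$ lies strictly between $v$ and $w$, then $u$ also separates $z$ from every vertex of $P_2$ — the vertices of $[v,w]\subseteq V(P_1)$ are separated from $z$ by $u$ by the projection fact, and since $u$ separates $z$ from $w$ it separates $z$ from the whole branch of $P_2$ beyond $w$ — hence $u$ is the projection of $z$ onto $P_2$ as well, and $\ec_T(P_2)\ge d_T(z,P_2)=d_T(z,u)=e$. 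If instead $u$ lies on the subpath of $P_1$ from $w$ to $x_1$ (the case $u=w$ included), then from $d_T(v,z)\le h$ and $d_T(v,z)=(h-a)+d_T(w,u)+e$ we obtain $e\le a-d_T(w,u)\le a$, and the first step gives $\ec_T(P_2)\ge d_T(x_1,P_2)=a\ge e$. Either way $\ec_T(P_2)\ge e=\ec_T(P_1)$, and exchanging the roles of $P_1$ and $P_2$ yields the reverse inequality, hence equality.

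I expect the genuinely delicate point to be the boundary case $u=w$. There $z$ can sit inside the branch at $w$ that carries $P_2$, so its projection onto $P_2$ need not be $w$ and $d_T(z,P_2)$ may be strictly smaller than $e$; thus $z$ cannot be recycled as a witness for $\ec_T(P_2)$. This is exactly what the first step is for: it supplies the alternative witness $x_1$, together with the ceiling $e\le a$ that is forced by $P_1$ being a longest path from $v$. The remaining verifications are routine manipulations with the projection fact and with additivity of distances along paths in a tree.
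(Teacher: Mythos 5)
Your argument is correct, and its overall strategy coincides with the paper's: both proofs locate the nearest vertex of $P_1$ to an eccentricity witness relative to the branching vertex $w$ and transfer the witness to $P_2$ when that nearest vertex lies on the common segment. Where the two diverge is in the treatment of a projection at or beyond $w$. The paper rules out, by a longest-path contradiction, that the projection is an \emph{internal} vertex of the $w,x_1$-segment, and then asserts that the witness carries over; this passes over the boundary case in which the projection equals $w$ and the witness $z$ sits in the branch of $T-w$ containing $P_2\setminus[v,w]$, where $z$ is no longer a valid witness for $\ec_T(P_2)$. Your preliminary observation $d_T(x_1,P_2)=d_T(x_2,P_1)=a$, combined with the ceiling $e\le a-d_T(w,u)$ forced by $P_1$ being a longest path from $v$, is exactly what closes that case by substituting $x_1$ as the witness, so on this point your write-up is more complete than the paper's. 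One trivial slip to repair: your case split (``$u$ strictly between $v$ and $w$'' versus ``$u$ on the $w,x_1$-subpath'') omits $u=v$ when $v\ne w$; the first case's reasoning applies verbatim there (the projection fact separates $z$ from all of $P_1\supseteq[v,w]$, and separation from $w$ gives separation from the rest of $P_2$), so the first case should simply read $u\in[v,w)$.
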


\begin{proof}
Let $w$ be the last common vertex of $P_1$ and $P_2$. Clearly $w$ exists, it is possible that $w=v$.  Let $t_1$ and $t_2$ be the other endpoints of $P_1$ and $P_2$, respectively. As $T$ is a tree and $P_1\ne P_2$ we have $t_1\ne t_2$. Let $u$ and $s$ be vertices of $T$ such that $d_{T}(u,P_{1}) = \ec_T(P_{1})$ and $d_{T}(s,P_{2}) = \ec_T(P_{2})$. Let further $P_{u}$ be the shortest $u,P_1$-path, $P_{s}$ the shortest $s, P_{2}$-path, $u_{0}$ the endpoint of $P_{u}$ different from $u$, and $s_{0}$ the endpoint of $P_{s}$ different from $s$.

We claim that $u_{0}$ lies in the $v,w$-subpath of $P_1$ (or $P_2$ for that matter). Suppose on the contrary that $u_0$ is an internal vertex of the $w,t_1$-subpath of $P_1$. Since $d_{T}(u,P_{1}) = \ec_T(P_{1})$ it follows that the length of $P_u$ is at least the length of the $w,t_2$-subpath of $P_2$. Since the latter path is of the same length as the $w,t_1$-subpath of $P_1$, we get that the concatenation of $P_u$ with the $v,u_0$-subpath of $P_1$ is a path strictly longer than $P_1$, a contradiction.

We have thus proved that $u_{0}$ lies in the $v,w$-subpath of $P_1$. By a parallel argument we also get that
$s_{0}$ lies in the $v,w$-subpath of $P_2$ (or in the  $v,w$-subpath of $P_1$ for that matter). But his means that $d_{G}(u,P_{1}) = d_{G}(s,P_{2})$ and hence $\ec_T(P_{1}) = \ec_T(P_{2})$.
\end{proof}

To conclude the section, we briefly discuss the computation complexity of the average Steiner 3-eccentricity on trees. The problem is clearly polynomial. If $T$ is a tree and $v$ its vertex, then we can first compute $\ec_3(v,T)$ by determining the $S$-Steiner tree for each $3$-set $S$ containing $v$, detecting in this way one of the largest size. This brute force strategy yields an $O(n^{4})$  algorithm. Based on the results of this section, we have devised a faster, $O(n^{2})$ algorithm. We do not present it here because in the follow up~\cite{Ilic2020OptimalAF} of our paper, Aleksandar Ili\'{c} has made full use of results from this section and elaborately devised a linear-time algorithm to calculate the average Steiner $3$-eccentricity of a tree. In another follow up paper~\cite{Li2020TheS}, the results of this section were extended to the Steiner $k$-eccentricity, $k\ge 3$, and presented a linear-time algorithm to calculate the Steiner $k$-eccentricity of a vertex in a tree. It would be interesting to see however, whether there is a linear-time algorithm to calculate the average Steiner $k$-eccentricity of a tree similar to Ili\'c's algorithm from~\cite{Ilic2020OptimalAF}.

\section{A transformation on trees}
\label{sec:trans}

Let $T$ be a tree with the structure as schematically depicted in Fig.~\ref{alpha-trans}. Here the $w,v_0$-path $P$ is a pendant path for which we require that $0\leq m(P) < \ec_{2}(u,T_{0})$ holds. (In case $m(P) = 0$, we have $v_0 = w$.) Then set $T^{'}=T\setminus\{wx:\ x\in N_{T_{1}}(w)\}\cup\{uy:\ y\in N_{T_{1}}(w)\}$, see Fig.~\ref{alpha-trans} again. We say that $T'$ is obtained from $T$ by a {\em $\pi$-transformation} and write $T' = \pi(T)$. The reverse transformation will be called a {\em $\pi^{-1}$-transformation}, that is, given $T'$ as in Fig.~\ref{alpha-trans}, we set $T=T^{'}\setminus\{ux:x\in N_{T_{1}}(u)\}\cup\{wy:y\in N_{T_{1}}(u)\}$ and write $T = \pi^{-1}(T')$.

\begin{figure}[htbp]
\vspace{0.5cm}
\begin{center}
\epsfig{file=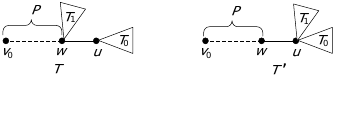, scale=2.0}
\end{center}
\vspace{-2.0cm}
\caption{$T$ and $T^{'}$}
\label{alpha-trans}
\end{figure}

\begin{theorem}\label{alpha-trans-not-decrease}
Let $T$ be a tree as in Fig.~\ref{alpha-trans}, and let $T' = \pi(T)$. Let $P_{0}$ be a longest path starting at $u$ in $T_{0}$. If $\ec_{T_0}(P_0) \leq \ec_{2}(w,P) < \ec_{2}(u,T_{0})$ and $\ec_{2}(w,T_{1}) \leq \ec_{2}(w,P)$, then $\aec_{3}(T^{'}) = \aec_{3}(T)$. Otherwise, $\aec_{3}(T^{'}) < \aec_{3}(T)$.
\end{theorem}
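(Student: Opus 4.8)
The plan is to reduce everything to the identity
$$\ec_3(v,G)=\ec_2(v,G)+\ec_G(Q_v),$$
valid for every tree $G$ with $n(G)\ge 3$ and every $v\in V(G)$, where $Q_v$ is any longest path of $G$ starting at $v$ (so $m(Q_v)=\ec_2(v,G)$, since $G$ is a tree). This is an immediate consequence of Lemmas~\ref{longest-shortest-path-preserve}--\ref{equal-ecc-for-diff-longest-path-of-same-verte}: a $3$-ecc $v$-tree contains some such $Q_v$; its third terminal $y$ satisfies $d_G(y,Q_v)=\ec_G(Q_v)$ by Lemma~\ref{third-terminal-property}; and by Lemma~\ref{equal-ecc-for-diff-longest-path-of-same-verte} the number $\ec_G(Q_v)$ is independent of the choice of $Q_v$. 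Since $V(T)=V(T')$, it then suffices to compare $\ec_3(v,T)$ with $\ec_3(v,T')$ for each $v$ and to sum.

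First I would record how $\pi$ changes distances. Writing $V(T)$ as the disjoint union of $V(T_0)$, $\{w\}$, $V(P)\setminus\{w\}$ and $V(T_1)\setminus\{w\}$ (with $u\in V(T_0)$ adjacent to $w$), one checks at once that $\pi$ preserves every distance except: $d_{T'}(x,y)=d_T(x,y)-1$ if one of $x,y$ lies in $V(T_1)\setminus\{w\}$ and the other in $V(T_0)$, and $d_{T'}(x,y)=d_T(x,y)+1$ if one of them lies in $V(T_1)\setminus\{w\}$ and the other in $\{w\}\cup(V(P)\setminus\{w\})$. Write $a=\ec_2(u,T_0)$, $b=\ec_{T_0}(P_0)$, $c=\ec_2(w,T_1)$ and $k=m(P)=\ec_2(w,P)$; the standing hypothesis is $0\le k<a$, so a longest path from any vertex which reaches $w$ continues through $u$ into $T_0$ rather than down $P$, and the equality condition of the theorem reads ``$b\le k$ and $c\le k$''.

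With this in hand the plan is to show, by a case analysis on the location of $v$, that $\ec_3(v,T')\le\ec_3(v,T)$ for every $v$, and that all these inequalities are equalities precisely when $b\le k$ and $c\le k$. For $v\in V(T_0)\cup\{w\}$ and for $v\in V(P)\setminus\{w\}$, route $Q_v$ through $u$ and a longest path of $T_0$ from $u$: then $\ec_2(v,\cdot)$ is non-increasing (and unchanged if $c\le k$), and the vertex of $T$ farthest from $Q_v$ is at a distance to $Q_v$ that $\pi$ leaves unchanged (for $c\le k$ that vertex is $v_0$; in general $T_1$-vertices only move \emph{closer} to $Q_v$), so $\ec_3(v,\cdot)$ is non-increasing, and unchanged if $c\le k$. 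The delicate case is $v\in V(T_1)\setminus\{w\}$: here distances from $v$ into $T_0$ shrink, so $\ec_2(v,\cdot)$ decreases, whereas $Q_v$ — now rerouted through $u$ rather than $w$ — puts $v_0$ at distance $k+1$ from it instead of $k$, so that the summand $\ec_G(Q_v)$ \emph{increases}, and the decrease in $\ec_2$ is cancelled by the increase in $\ec_G(Q_v)$ exactly when $v_0$ is the vertex farthest from $Q_v$, i.e.\ exactly when $d(s,Q_v)\le k$ for all $s\in V(T_0)\cup V(T_1)$ — which is precisely the conjunction $b\le k$ and $c\le k$. Thus $\aec_3(T')=\aec_3(T)$ when $b\le k$ and $c\le k$.

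For the ``otherwise'' part it remains, given the inequality just proved, to exhibit one vertex where it is strict. If $c\le k$ but $b>k$, then for any $v\in V(T_1)\setminus\{w\}$ the cancellation above is destroyed: the vertex of $T_0$ realizing $\ec_{T_0}(P_0)=b$ is strictly farther from $Q_v$ than $v_0$ is, in both $T$ and $T'$, so the summand $\ec_G(Q_v)$ equals $b$ in each, while $\ec_2(v,\cdot)$ still decreases, whence $\ec_3(v,T')<\ec_3(v,T)$. If $c>k$, pick instead a vertex $z\in V(T_0)$ near enough to $u$ that $\ec_2(z,T)$ is attained by the path from $z$ through $u$ and $w$ into $T_1$; then $\ec_2(z,\cdot)$ decreases while $\ec_G(Q_z)$ does not increase (the far end of $Q_z$ lies in $T_0$, at distances untouched by $\pi$), so $\ec_3(z,T')<\ec_3(z,T)$, and one has to check, from $c>k$ and $k<a$, that such a $z$ exists. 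I expect the two hardest points to be the bookkeeping of the competing changes in the case $v\in V(T_1)\setminus\{w\}$ and this existence of a witness $z$ when $c>k$; the degenerate configurations — $m(P)=0$ (so $w=v_0$), and $T_1$ a single vertex (then $\pi$ is the identity, $c=0\le k$, and we are automatically in the equality regime) — should be handled separately at the outset.
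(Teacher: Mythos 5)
Your organizing identity $\ec_3(v,G)=\ec_2(v,G)+\ec_G(Q_v)$ is correct, follows from Lemmas~\ref{longest-shortest-path-preserve}--\ref{equal-ecc-for-diff-longest-path-of-same-verte} exactly as you say, and is a cleaner bookkeeping device than the paper uses (the paper instead runs a nine-case analysis over the relative order of $\ec_{T_0}(P_0)$, $\ec_2(w,T_1)$, $\ec_2(w,P)$, $\ec_2(u,T_0)$). Your description of how $\pi$ changes distances is also correct. The fatal problem is the central claim that $\ec_3(v,T')\le\ec_3(v,T)$ for \emph{every} vertex $v$: this is false, and the paper's own Case~9 is the counterexample. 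Take $c=\ec_2(w,T_1)>a=\ec_2(u,T_0)$ with, moreover, $\ec_{T_1}(P_1)>a$ (where $P_1$ is a longest path in $T_1$ from $w$). For $v\in V(P)\setminus\{w\}$ at distance $g$ from $w$, the longest path $Q_v$ \emph{cannot} be routed ``through $u$ and a longest path of $T_0$ from $u$'' as you propose: since $c>a$ it must go into $T_1$, so $\ec_2(v,T)=g+c$ while $\ec_2(v,T')=g+1+c$ (the $T_1$-vertices have moved \emph{farther} from $v$, not closer), and $\ec_T(Q_v)=\ec_{T'}(Q_{v}')=\ec_{T_1}(P_1)$ in both trees because $\ec_{T_1}(P_1)>a\ge$ every other candidate. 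Hence $\ec_3(v,T')=\ec_3(v,T)+1>\ec_3(v,T)$ for all $m(P)$ of these vertices.

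Consequently your architecture (pointwise non-increase plus one strict witness) cannot prove the ``otherwise'' direction in this regime; an averaging step is unavoidable. The paper closes Case~9 by observing that every vertex of $V(T_0)$ has its Steiner $3$-eccentricity drop by exactly $1$ (the $\ec_2$ term drops by $1$ while the $\ec(Q_v)$ term stays at $\ec_{T_1}(P_1)$), and that $|V(P)\setminus\{w\}|=m(P)<\ec_2(u,T_0)+1\le n(T_0)$ by the standing hypothesis $m(P)<\ec_2(u,T_0)$, so the total change is still strictly negative. Your identity actually makes this computation transparent, so the fix is local: in the subcase $\ec_{T_1}(P_1)>\ec_2(u,T_0)$ you must abandon the pointwise inequality on $V(P)\setminus\{w\}$, compute the exact $+1$ there and the exact $-1$ on $V(T_0)$, and compare cardinalities. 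Your claim ``$\ec_2(v,\cdot)$ is non-increasing for $v\in V(P)\setminus\{w\}$'' should likewise be restricted to $c\le a$; for $c>a$ it already fails at the level of ordinary eccentricity. The equality case ($b\le k$ and $c\le k$) and the witness arguments you give for $c\le k<b$ are fine.
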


\begin{proof}
We are going to consider the behavior of the Steiner $3$-eccentricity on the sets of vertices $V(P)\setminus \{w\}$, $V(T_{1})$, and $V(T_{0})$ on the following cases. (Recall that in the definition of the $\pi$-transformation we have required that  $m(P)  = \ec_{2}(w,P) < \ec_{2}(u,T_{0})$ holds.)

In order to cover all possibilities, the proof is based on the relationship among $\ec_{2}(w,T_{1})$, $\ec_{2}(w,P)$ and $\ec_{2}(u,T_{0})$. By the definition, we have $\ec_{2}(w,P) < \ec_{2}(u,T_{0})$. So on the whole, there are three kinds of relationship among them, i.e., (1) $\ec_{2}(w,P)< \ec_{2}(u,T_{0})<\ec_{2}(w,T_{1})$, (2) $\ec_{2}(w,P)< \ec_{2}(w,T_{1})\leq \ec_{2}(u,T_{0})$, and (3) $\ec_{2}(w,T_{1})\leq \ec_{2}(w,P)<\ec_{2}(u,T_{0})$. In the following, we elaborate that the theorem holds for each of these three cases.

\medskip\noindent
{\bf Case 1}: $\ec_{2}(w,P)< \ec_{2}(u,T_{0})<\ec_{2}(w,T_{1})$.\\
To deal with this case, we need another variable $\ec_{T_1}(P_1)$, where $P_{1}$ is a longest path starting at $w$ in $T_{1}$. Since $\ec_{2}(w,T_{1})$ is the length of the longest path starting at $w$ in $T_{1}$, we have $\ec_{T_1}(P_1)\leq \ec_{2}(w,T_{1})$. Considering the relationship among $\ec_{2}(w,P)$, $ \ec_{2}(u,T_{0})$, $\ec_{2}(w,T_{1})$, and $\ec_{T_1}(P_1)$, there are the following three sub-cases to consider. 

\medskip\noindent
{\bf Case 1.1}: $0\leq \ec_{2}(w,P)< \ec_{2}(u,T_{0})<\ec_{T_1}(P_1)\leq \ec_{2}(w,T_{1})$.\\
In this case, the following three statements hold.
                \begin{enumerate}[(i)]
                    \item $\ec_{3}(v,T)-\ec_{3}(v,T^{'})=-1$ for every vertex $v\in V(P)\setminus \{w\}$.

                    By Lemmas~\ref{longest-shortest-path-preserve} and~\ref{third-terminal-property}, the other two terminals must be in $T_{1}$. This remains true after the  $\pi$-transformation is performed. So for every $v\in V(P)\setminus \{w\}$,  $\ec_{3}(v,T)$ increases by $1$ after the transformation.

                    \item $\ec_{3}(v,T)-\ec_{3}(v,T^{'})\geq0$  for every vertex $v\in V(T_{1})$.
                    \item $\ec_{3}(v,T)-\ec_{3}(v,T^{'})=1$  for every vertex $v\in V(T_{0})$.

                    For every vertex $v\in V(T_{0})$, by Lemma \ref{longest-shortest-path-preserve}, the other endpoint of a longest path starting at $v$ must be in $T_{1}$. This holds true after the $\pi$-transformation is performed. By Lemma~\ref{third-terminal-property}, the third terminal could not be $v_{0}$.  So after the $\pi$-transformation,  $\ec_{3}(v,T)$ decreases  by $1$.
                \end{enumerate}

Since we have assumed that $\ec_{2}(w,P)<\ec_{2}(u,T_{0})$, we have $|V(P)\setminus \{w\}| < n(T_{0})$.  In summary, in this case, we have
                \begin{align*}
                   \aec_{3}(T)-\aec_{3}(T^{'}) & = \frac{1}{n}\Big\{\mathop{\Sigma}\limits_{v\in V(P)\setminus \{w\}}[\ec_{3}(v,T)-\ec_{3}(v,T^{'})]+ \\
                                                & \ \ \ \mathop{\Sigma}\limits_{v\in V(T_{1}) }[\ec_{3}(v,T)-\ec_{3}(v,T^{'})] +  \mathop{\Sigma}\limits_{v\in V(T_{0})}[\ec_{3}(v,T)-\ec_{3}(v,T^{'})]\Big\}  \\
                                                &\geq  \frac{1}{n}\Big[|V(T_{0})|-|V(P)\setminus \{w\}|\Big] \\
                                                &>0\,.
                \end{align*}
We conclude that $\aec_{3}(T^{'}) < \aec_{3}(T)$ holds in this case.

\medskip\noindent
{\bf Case 1.2}: $0\leq \ec_{2}(w,P)<\ec_{T_1}(P_1)\leq \ec_{2}(u,T_{0})<\ec_{2}(w,T_{1})$. \\
Now we have:
                \begin{enumerate}[(i)]
                    \item $\ec_{3}(v,T)=\ec_{3}(v,T^{'})$ for every vertex $v\in V(P)\setminus \{w\}$.
                    \item $\ec_{3}(v,T)-\ec_{3}(v,T^{'})\geq0$  for every vertex $v\in V(T_{1})$.
                    \item $\ec_{3}(v,T)-\ec_{3}(v,T^{'})\geq0$  for every vertex $v\in V(T_{0})\setminus \{u\}$.
                    \item $\ec_{3}(u,T)-\ec_{3}(v,T^{'})=1$  for the vertex $u$.
                \end{enumerate}
Yet again $\aec_{3}(T^{'}) < \aec_{3}(T)$.

\medskip\noindent
{\bf Case 1.3}: $0\leq \ec_{T_1}(P_1)\leq \ec_{2}(w,P)< \ec_{2}(u,T_{0})<\ec_{2}(w,T_{1})$. \\
Now we have:
                \begin{enumerate}[(i)]
                    \item $\ec_{3}(v,T)=\ec_{3}(v,T^{'})$ for every vertex $v\in V(P)\setminus \{w\}$.
                    \item $\ec_{3}(v,T)-\ec_{3}(v,T^{'})\geq0$  for every vertex $v\in V(T_{1})$.
                    \item $\ec_{3}(v,T)-\ec_{3}(v,T^{'})\geq0$  for every vertex $v\in V(T_{0})\setminus \{u\}$.
                    \item $\ec_{3}(u,T)-\ec_{3}(v,T^{'})=1$.
                \end{enumerate}
Once more $\aec_{3}(T^{'}) < \aec_{3}(T)$.

So in Case 1 the theorem holds. In order to elaborate the other two cases, we use the variable $\ec_{T_0}(P_0)$ instead of $\ec_{T_1}(P_1)$ in the following proof. Since $P_{0}$ is a longest path starting at $u$ in $T_{0}$, we have $\ec_{T_0}(P_0)\leq \ec_{2}(u,T_{0})$.

\medskip\noindent
{\bf Case 2}: $\ec_{2}(w,P)< \ec_{2}(w,T_{1})\leq \ec_{2}(u,T_{0})$.\\
According to the relationship among $\ec_{2}(w,P)$, $\ec_{2}(w,T_{1})$, $\ec_{2}(u,T_{0})$ and $\ec_{T_0}(P_0)$, there are three sub-cases as following.

\medskip\noindent
{\bf Case 2.1}: $ 0\leq \ec_{2}(w,P)< \ec_{2}(w,T_{1})<\ec_{T_0}(P_0)\leq \ec_{2}(u,T_{0})$. \\
Now:
                \begin{enumerate}[(i)]
                    \item $\ec_{3}(v,T)=\ec_{3}(v,T^{'})$ for every vertex $v\in V(P)\setminus \{w\}$.
                    \item $\ec_{3}(v,T)-\ec_{3}(v,T^{'})=1$ for every vertex $v\in V(T_{1})$.
                    \item $\ec_{3}(v,T)-\ec_{3}(v,T^{'})\geq0$ for every vertex $v\in V(T_{0})$.
                \end{enumerate}
Again $\aec_{3}(T^{'}) < \aec_{3}(T)$.

\medskip\noindent
{\bf Case 2.2}: $ 0\leq \ec_{2}(w,P)<\ec_{T_0}(P_0)\leq \ec_{2}(w,T_{1})\leq \ec_{2}(u,T_{0})$. \\
Now:
                \begin{enumerate}[(i)]
                    \item $\ec_{3}(v,T)=\ec_{3}(v,T^{'})$ for every vertex $v\in V(P)\setminus \{w\}$.
                    \item $\ec_{3}(v,T)-\ec_{3}(v,T^{'})=1$  for every vertex $v\in V(T_{1})$.
                    \item $\ec_{3}(v,T)-\ec_{3}(v,T^{'})\geq 0$  for every vertex $v\in V(T_{0})\setminus \{u\}$.
                    \item $\ec_{3}(u,T)-\ec_{3}(v,T^{'})=1$ for the vertex $u$.
                \end{enumerate}
So also in this case $\aec_{3}(T^{'}) < \aec_{3}(T)$.

\medskip\noindent
{\bf Case 2.3}: $0\leq \ec_{T_0}(P_0)\leq \ec_{2}(w,P)< \ec_{2}(w,T_{1})\leq \ec_{2}(u,T_{0})$. \\
Now:
                \begin{enumerate}[(i)]
                    \item $\ec_{3}(v,T)=\ec_{3}(v,T^{'})$ for every vertex $v\in V(P)\setminus \{w\}$.
                    \item $\ec_{3}(v,T)-\ec_{3}(v,T^{'})\geq 0$ for every vertex $v\in V(T_{1})$.
                    \item $\ec_{3}(v,T)-\ec_{3}(v,T^{'})\geq 0$  for every vertex $v\in V(T_{0})\setminus \{u\}$.
                    \item $\ec_{3}(u,T)-\ec_{3}(v,T^{'})=1$.
                \end{enumerate}
We conclude that $\aec_{3}(T^{'}) < \aec_{3}(T)$ in this case.

\medskip\noindent
{\bf Case 3}: $\ec_{2}(w,T_{1})\leq \ec_{2}(w,P)<\ec_{2}(u,T_{0})$.\\
This case is similar to Case 2, there are also there sub-cases which are elaborated in the following.

\medskip\noindent
{\bf Case 3.1}: $0\leq \ec_{2}(w,T_{1})\leq \ec_{2}(w,P)<\ec_{T_0}(P_0)\leq \ec_{2}(u,T_{0})$.\\
Now:
\begin{enumerate}[(i)]
\item $\ec_{3}(v,T)=\ec_{3}(v,T^{'})$ for every vertex $v\in V(P)\setminus \{w\}$.
\item $\ec_{3}(v,T)-\ec_{3}(v,T^{'})=1$ for every vertex $v\in V(T_{1})$.
\item $\ec_{3}(v,T)=\ec_{3}(v,T^{'})$ for every vertex $v\in V(T_{0})$.
\end{enumerate}
Therefore,  $\aec_{3}(T^{'}) < \aec_{3}(T)$.

\medskip\noindent
{\bf Case 3.2}:  $0\leq \ec_{2}(w,T_{1})<\ec_{T_0}(P_0)\leq \ec_{2}(w,P)<\ec_{2}(u,T_{0})$.\\
In this case we obtain the same conclusions as the Case 1. Hence we conclude that $\aec_{3}(T^{'}) = \aec_{3}(T)$ holds also in this case.

\medskip\noindent
{\bf Case 3.3}: $0\leq \ec_{T_0}(P_0)\leq \ec_{2}(w,T_{1})\leq \ec_{2}(w,P)<\ec_{2}(u,T_{0})$.\\
In this case it is evident that the following three statements hold.
\begin{enumerate}[(i)]
\item $\ec_{3}(v,T)=\ec_{3}(v,T^{'})$ for every vertex $v\in V(P)\setminus \{w\}$.
\item $\ec_{3}(v,T)=\ec_{3}(v,T^{'})$ for every vertex $v\in V(T_{1})$.
\item $\ec_{3}(v,T)=\ec_{3}(v,T^{'})$ for every vertex $v\in V(T_{0})$.
\end{enumerate}
By the definition of the average Steiner $3$-eccentricity it thus follows that $\aec_{3}(T^{'}) = \aec_{3}(T)$ holds in this case.

Since all the possibilities have been verified, the theorem holds.
\end{proof}

\begin{corollary}\label{inverse-alpha-trans}
Let $T'$ be a tree as in Fig.~\ref{alpha-trans}, and let $T = \pi^{-1}(T')$. Let $P_{0}$ be a longest path starting at $u$ in $T_{0}$. If $\ec_{T_0}(P_0) \leq \ec_{2}(w,P) < \ec_{2}(u,T_{0})$ and $\ec_{2}(w,T_{1}) \leq \ec_{2}(w,P)$, then $\aec_{3}(T^{'}) = \aec_{3}(T)$. Otherwise, $\aec_{3}(T) > \aec_{3}(T')$.
\end{corollary}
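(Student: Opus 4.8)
\noindent
The plan is to reduce Corollary~\ref{inverse-alpha-trans} directly to Theorem~\ref{alpha-trans-not-decrease}, exploiting that the $\pi$- and $\pi^{-1}$-transformations are mutual inverses. First I would record the elementary observation underlying the reduction: if $T'$ is a tree with the structure of Fig.~\ref{alpha-trans} (with the subtree $T_1$ attached at $u$) and $T = \pi^{-1}(T')$ is obtained by reattaching the edges joining $T_1$ to $u$ so that they now join $T_1$ to $w$, then $T$ itself has the structure required in the hypothesis of Theorem~\ref{alpha-trans-not-decrease} (the subtree $T_1$ attached at $w$). The subtree $T_0$ containing $u$, the pendant path $P$, and the vertices $u,w,v_0$ are literally the same objects in both pictures, and the defining constraint $0\le m(P)<\ec_{2}(u,T_{0})$ of the $\pi$-transformation carries over verbatim. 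Hence $\pi(T)$ is defined and, by construction, $\pi(T) = T'$, since $\pi$ removes exactly the edges joining $T_1$ to $w$ and adds exactly the edges joining $T_1$ to $u$, undoing what $\pi^{-1}$ did.

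Next I would simply invoke Theorem~\ref{alpha-trans-not-decrease} with this $T$ and with $T' = \pi(T)$. The quantities $\ec_{T_0}(P_0)$, $\ec_{2}(w,P)$, $\ec_{2}(u,T_{0})$, and $\ec_{2}(w,T_{1})$ depend only on $T_0$, $P$, and $T_1$, which are unchanged under the transformation, so the hypothesis ``$\ec_{T_0}(P_0) \leq \ec_{2}(w,P) < \ec_{2}(u,T_{0})$ and $\ec_{2}(w,T_{1}) \leq \ec_{2}(w,P)$'' of the corollary is exactly the hypothesis of the theorem. The theorem then yields $\aec_{3}(T') = \aec_{3}(T)$ in that case, and $\aec_{3}(T') < \aec_{3}(T)$ — equivalently $\aec_{3}(T) > \aec_{3}(T')$ — in all other cases, which is precisely the assertion of the corollary.

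I do not expect any genuine obstacle here: the content of the corollary is entirely contained in Theorem~\ref{alpha-trans-not-decrease}, and the only point deserving a sentence of justification is the identification $T' = \pi\bigl(\pi^{-1}(T')\bigr)$, together with the remark that $\pi^{-1}(T')$ does indeed have the configuration on which Theorem~\ref{alpha-trans-not-decrease} operates. If one wanted to be scrupulous, one would also note that the neighbours of the attaching vertex inside $T_1$ are moved as a block by each transformation, so no ambiguity arises in how $T_1$ is reattached; this is built into the definitions of $\pi$ and $\pi^{-1}$. Thus the whole proof is essentially a one-line appeal to the theorem once the inverse relationship has been spelled out.
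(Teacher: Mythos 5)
Your reduction is exactly the paper's (implicit) justification: the corollary carries no separate proof there precisely because, with $T=\pi^{-1}(T')$ one has $T'=\pi(T)$ and all the quantities in the hypotheses are computed from the unchanged pieces $T_0$, $P$, $T_1$, so the statement is Theorem~\ref{alpha-trans-not-decrease} read backwards. Your proposal is correct and matches the intended argument.
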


\section{More on the $\pi$-transformation}\label{trans-inform}

In the section, we present several properties of the transformation introduced in the previous section. In Section~\ref{pi-exist} we list conditions under which the transformation exists, while in Section~\ref{bi-star-sec} we study an extremal graph at which we would arrive after a sequence of $\pi$-transformations.

\subsection{The existence of $\pi$-transformation}\label{pi-exist}
If a vertex of a tree has degree at least $3$, then the vertex is said to be a \emph{branching vertex}. If two distinct pendent paths are attached to the same branching vertex, then the branching vertex is said to be a \emph{pendent branching vertex}. Referring to the definition of the $\pi$-transformation in Section~\ref{sec:trans}, if the subtree $T_{1}$ is a pendent path of $T$, then the $\pi$-transformation will be refereed to as a \emph{pendent $\pi$-transformation} on $T$.

\begin{lemma}\label{further-branch-ver}
If there are two distinct branching vertices $u$ and $v$ in a tree, then there are two distinct pendent branching vertices $u_{1}$ and $u_{2}$ such that the distance between $u_{1}$ and $u_{2}$ is not less than the distance between $u$ and $v$.
\end{lemma}

\begin{proof}
Let $P$ be the path between $u$ and $v$. Since $u$ and $v$ are distinct, there must be two distinct leaves $u'$ and $v'$ such that the path between $u$ and $u'$ shares no edge with $P$, and the path between $v$ and $v'$ shares no edge with $P$. Moreover, there is a pendent branching vertex on the path between $u$ and $u'$, while there is a pendent branching vertex on the path between $v$ and $v'$, because $u$ and $v$ are both branching vertices. Therefore, there are two distinct pendent branching vertices $u_{1}$ and $u_{2}$ such that the distance between $u_{1}$ and $u_{2}$ is not less than the distance between $u$ and $v$.
\end{proof}

\begin{lemma}\label{pendent-trans-1}
If there are  two distinct branching vertices of a tree $T$ such that the distance between them is at least $2$, then there exists a pendent $\pi$-transformation on $T$.
\end{lemma}

\begin{proof}
 By Lemma \ref{further-branch-ver}, there are two distinct pendent branching vertices $u_{1}, u_{2}\in V(T)$ such that the distance between $u_{1}$ and $u_{2}$ is at least $2$, cf.~Fig.~\ref{pendent-trans-fig}. 
 The paths between $v_{3}$ and $u_{1}$ and between $w_{1}$ and $u_{1}$ are two pendent paths attached to the pendent branching vertex $u_{1}$, while the paths between $v_{4}$ and $u_{2}$ and between $w_{2}$ and $u_{2}$ are two pendent paths attached to the pendent branching vertex $u_{2}$. Let $P$ and $Q$ be the pendent paths between $w_{1}$ and $u_{1}$ and between $w_{2}$ and $u_{2}$, respectively. The neighbors of $u_{1}$ and $u_{2}$ on the path between them are respectively marked as $v_{1}$ and $v_{2}$. Since the distance between $u_{1}$ and $u_{2}$ is more than $1$, we have $v_{1}\not = u_{2}$ and $v_{2}\not = u_{1}$.

 \begin{figure}[ht!]
\begin{center}
\epsfig{file=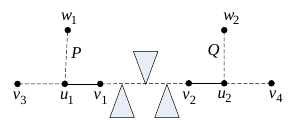, scale=1.5}
\end{center}
\vspace{-0.5cm}
\caption{The configuration of two pendent paths $P$ and $Q$}
\label{pendent-trans-fig}
\end{figure}

 If the length of the path between $v_{3}$ and $u_{1}$ is strictly less than the length of the path between $v_{1}$ and $v_{4}$, then a new tree $T'$ can be achieved by a pendent $\pi$-transformation, i.e., $T' = T\setminus\{(u_{1},x):x\in N_{P}(u_{1})\}\cup \{(v_{1},y):y\in N_{P}(u_{1})\}$.

 Otherwise, since $v_{1}\not = u_{2}$ and $v_{2}\not = u_{1}$, the length of the path between $v_{3}$ and $v_{2}$ is strictly larger than the length of the path between $u_{2}$ and $v_{4}$. Then a new tree $T''$ can be achieved by a pendent $\pi$-transformation, i.e., $T'' = T\setminus\{(u_{2},x):x\in N_{Q}(u_{2})\}\cup \{(v_{2},y):y\in N_{Q}(u_{2})\}$.
\end{proof}

As trees of course contain no triangles, Lemma~\ref{pendent-trans-1} immediately gives the following consequence. 

\begin{lemma}\label{pendent-trans-2}
If a tree $T$ has more than two branching vertices, then there exists a pendent $\pi$-transformation on $T$.
\end{lemma}

\begin{lemma}\label{pendent-trans-3}
Let $T$ be a tree with exact two branching vertices, where the two branching vertices are adjacent. If there are two pendent paths such that they have different length and attach to distinct branching vertices, then there exists a $\pi$-transformation on $T$. Otherwise, there is no $\pi$-transformation on $T$.
\end{lemma}
\begin{proof}
Let $u_{1}$ and $u_{2}$ be two unique (adjacent) branching vertices of $T$, see Fig.~\ref{two-branching-fig}.

Suppose first that there are two distinct leaves $v_{1}$ and $v_{2}$  such that the length of the path between $v_{1}$ and $u_{1}$ is not equal to length between $v_{2}$ and $u_{2}$. We may assume without loss of generality that the first of these paths is longer than the second. . Then the length of the path between $v_{2}$ and $u_{2}$ is strictly less than the length between $u_{2}$ and $v_{1}$.  Let $T_{1}$ be the subtree rooted at $u_{2}$ obtained by deleting all the edges on the path between $v_{1}$ and $v_{2}$, see Fig.~\ref{two-branching-fig} again. A new tree $T'$ can be obtained by a $\pi$-transformation on $T$ as $T' = T\setminus\{(u_{2},x):x\in N_{T_{1}}(u_{2})\}\cup\{(u_{1},y):y\in N_{T_{1}}(u_{2})\}$. 

 \begin{figure}[ht!]
\begin{center}
\epsfig{file=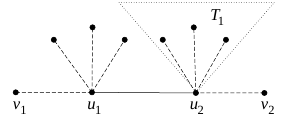, scale=1.5}
\end{center}
\vspace{-0.5cm}
\caption{The configuration of a tree with exact two branching verticces}
\label{two-branching-fig}
\end{figure}

In the second case all pendent paths attaching to both $u_{1}$ and $u_{2}$ have the same length. Then the precondition of the $\pi$-transformation does not hold and hence there is no $\pi$-transformation on $T$.
\end{proof}

\begin{lemma}\label{pendent-trans-4}
Let $T$ be a tree with exactly one branching vertex. If there are two distinct pendent paths such that their lengths differ by at least $2$, then here exists a $\pi$-transformation on $T$.
\end{lemma}

\begin{proof}
Let $u$ be the unique branching vertex of $T$. Let $v_{1}$ and $v_{2}$ be two leaves such that the length of the path between $v_{1}$ and $u$ has at least two more edges than the path between $v_{2}$ and $u$, see Fig.~\ref{one-branching-fig}.

\begin{figure}[ht!]
\begin{center}
\epsfig{file=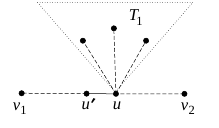, scale=1.5}
\end{center}
\vspace{-0.5cm}
\caption{The configuration of a tree with exact one branching verticces}
\label{one-branching-fig}
\end{figure}

Let $T_{1}$ be the subtree rooted at $u$ obtained by deleting all the edges on the path between $v_{1}$ and $v_{2}$. Let $u'$ be the unique neighbor of $u$ on the path between $u$ and $v_{1}$. Then the length of the path between $v_{1}$ and $u'$ is strictly larger than that between $u$ and $v_{2}$. So a new tree $T'$ can be obtained by a $\pi$-transformation on $T$ as $T' = T\setminus \{(u, x): x\in N_{T_{1}}(u)\}\cup\{(u',y):y\in N_{T_{1}}(u)\}$. 
\end{proof}

\subsection{Properties of a bi-stars}\label{bi-star-sec}

We say that a tree $T$ is a \emph{bi-star} if $T$ contains exactly two branching vertices, the two branching vertices are adjacent, and all its pendent paths are of the same length. (Note that in the literature bi-stars are often restricted to the case when all the pendent paths are of length $1$.) By Lemma \ref{pendent-trans-3},  bi-stars admit no $\pi$-transformation. In this subsection we study the Steiner $3$-eccentricity in a bi-stars. 

Let $u_{1}$ and $u_{2}$ be the two branching vertices in the bi-star $T$. Let $S_{1}$ and $S_{2}$ be the  subtrees respectively rooted at $u_{1}$ and $u_{2}$ obtained by deleting the edge $u_{1}u_2$. Then $S_{1}$ and $S_{2}$ are both star-like trees. By definition, all pendent paths in $S_{1}$ and $S_{2}$ have the same length. Let $v$ be a leaf in $V(S_{1})\cap V(T)$ and $w$ be a leaf in $V(S_{2})\cap V(T)$. Let $u_{1}'$ be the neighbor of $u_{1}$ on the path between $u_{1}$ and $v$ in $T$. Let $T_{1}$ be the sub-tree rooted at $u_{1}$ which is obtained by deleting the edges $u_{1}u_{1}'$ and $u_{1}u_{2}$, see Fig.~\ref{bi-star-fig}.

\begin{figure}[ht!]
\begin{center}
\epsfig{file=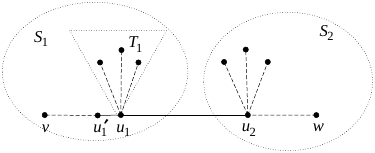, scale=1.5}
\end{center}
\vspace{-0.5cm}
\caption{The configuration of a bi-star}
\label{bi-star-fig}
\end{figure}

\begin{lemma}\label{bi-star-cross-S-set}
Let $T$ be a bi-star, and $S_{1}, S_{2}$ its corresponding star-like trees. If $v\in V(S_i)$, $i\in \{1,2\}$, then a Steiner $3$-ecc $v$-set has a vertex in $V(S_{3-i})$.
\end{lemma}

\begin{proof}
By symmetry we may assume without loss of generality that $v\in V(S_{1})$. Let $S_{v}$ be a Steiner $3$-ecc $v$-set.  By Lemma \ref{distance-growing} and the definition of bi-stars, the other two vertices in $S_{v}$ must be leaves of $T$. Suppose on the contrary that no vertex in $S_{v}\setminus \{v\}$ is in $S_{2}$. Then there must be a vertex $u\in S_{v}\setminus \{v\}$ such that $u$ is a leaf in $S_{1}$. Let $S'_{v}=S_{v}\setminus \{u\}\cup \{w\}$. Then $S'_{v}$ is a $3$-set containing $v$ and the Steiner tree on $S'_{v}$ is strictly larger than that on $S_{v}$, since there is an edge between $u_{1}$ and $u_{2}$, a contradiction. 
\end{proof}

We can define a $\tau$-transformation on a bi-star $T$, which is to transform $T$ into a new tree $T'$, with $T' = \tau(T) = T\setminus\{(u_{1},x): x\in N_{T_{1}}(u_{1})\}\cup \{(u_{2},y): y\in N_{T_{1}}(u_{1})\}$, see Fig.~\ref{tau-trans-fig}. Obviously, $T'$ is a star-like tree with exactly one branching vertex.

\begin{figure}[ht!]
\begin{center}
\epsfig{file=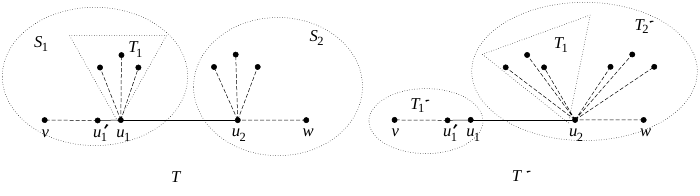, scale=1.5}
\end{center}
\vspace{-0.5cm}
\caption{The $\tau$-transformation: $T'=\tau(T)$ where $T$ is bi-star}
\label{tau-trans-fig}
\end{figure}

Deleting the edge $u_{1}u_{2}$ from $T'$ results in two subtrees. Let $T_{1}'$ be the subtree rooted at $u_{1}$, and $T_{2}'$ the subtree rooted at $u_{2}$, see Fig.~\ref{tau-trans-fig}. The proof of the following lemma is similar to that of Lemma~\ref{bi-star-cross-S-set}, hence we omit it. 

\begin{lemma}\label{tau-trans-proerty}
Let $T$ be a bi-star, let $T'=\tau(T)$, and let $T_{1}', T_{2}'$, be the the subtrees defined in the above paragraph. If $v\in V(T'_i)$, $i\in \{1,2\}$, then a Steiner $3$-ecc $v$-set has a vertex in $V(T'_{3-i})$.
\end{lemma}

\begin{theorem}\label{maintain-ecc-tau-trans}
Let $T$ be a bi-star and let $T' = \tau(T)$. Then for every vertex $v\in V(T)=V(T')$ the Steiner $3$-eccentricity of $v$ is the same in both $T$ and $T'$.
\end{theorem}

\begin{proof}
In this proof, we use the notations from Fig.~\ref{tau-trans-fig}. Let $P_{1}$ be the path between $u_{1}$ and $v$, and let $P_{2}$ be the path between $u_{2}$ and $w$.  The vertex set $V(T)$ is partitioned into the following four subsets: $V_{1} = V(P_{1})$, $V_{2} = V(P_{2})$, $V_{3}=V(S_{1})\setminus V_{1}$ and $V_{4}=V(S_{2})\setminus V_{2}$. We are going to prove that for each set $V_{i}$, $i\in\{1,2,3,4\}$, and for every vertex $v\in V_{i}$, we have $\ec_{3}(v,T)=\ec_{3}(v,T')$.

\medskip\noindent
{\bf Case 1}: $v\in V_{1}= V(P_{1})$.\\
The position of $v$ does not change after the $\tau$-transformation. Let $S_{T}(v)$ and $S_{T'}(v)$ be Steiner $3$-ecc $v$-sets in $T$ and $T'$, respectively. By Lemmas~\ref{bi-star-cross-S-set}, \ref{tau-trans-proerty} and \ref{distance-growing}, without loss of generality, let $w$ be in both $S_{T}(v)$ and $S_{T'}(v)$. Now we consider the third vertex in $S_{T}(v)$ as well as $S_{T'}(v)$. Obviously, in $S_{T'}(v)$ the third vertex must be a leaf in $V(T_{2}')\setminus\{w\}$. So no matter whether the third vertex of $S_{T}(v)$ is in $V_{3}$ or in $V_{4}$, the size of Steiner tree on $S_{T}(v)$ and that on $S_{T'}(v)$ are the same.

\medskip\noindent
{\bf Case 2}: $v\in V_{2}= V(P_{2})$.\\
The proof in this case is similar to Case 1.

\medskip\noindent
{\bf Case 3}: $v\in V_{2}=V(S_{1})\setminus V_{1}$.\\
After the $\tau$-transformation, the position of $v$ is changed. Let $S_{T}(v)$ and $S_{T'}(v)$ be Steiner $3$-ecc $v$-sets in $T$ and $T'$, respectively. By Lemmas~\ref{bi-star-cross-S-set}, \ref{tau-trans-proerty} and \ref{distance-growing}, without loss of generality, let $w$ be in $S_{T}(v)$ and $v$ be in $S_{T'}(v)$. Now consider the third vertex in $S_{T}(v)$ as well as in $S_{T'}(v)$. It is obvious that the third vertex in $S_{T}(v)$ is a leaf of $S_{1}\cup S_{2}\setminus\{w\}$, while the third vertex in $S_{T'}(v)$ is a leaf of $S_{1}\cup S_{2}\setminus\{v\}$. Therefore, the sizes of Steiner trees on $S_{T}(v)$ and on $S_{T'}(v)$ are the same.

\medskip\noindent
{\bf Case 4}: $v\in V_{2}=V(S_{2})\setminus V_{2}$.\\
The proof in this case is also similar to Case 1.
\end{proof}

\section{Some applications of the $\pi$-transformation}
\label{applic}

As an application of the $\pi$-transformation, we establish in this section several lower and upper bounds on the average Steiner $3$-eccentricity of trees in terms of the order, the maximum degree, the number of pendent vertices, the matching number, the independent number, the diameter, and the radius.

\subsection{Lower and upper bounds on general trees}

\begin{theorem}\label{bounds-on-general-trees}
If $T$ is a tree on $n$ vertices, then
$$3-\frac{1}{n}\leq \aec_{3}(T)\leq n-1\,.$$
The lower bound is achieved when $T\cong K_{1,n-1}$, while  the upper bound is achieved when $T\cong P_n$.
\end{theorem}

\begin{proof}
Let $T$ be an arbitrary tree of order $n$.  Then repeatedly apply the $\pi$-transformation on $T$ until no further $\pi$-transformation is possible.

We claim that in the last step we must necessarily arrive at $K_{1,n-1}$. See Fig.~\ref{fig:to-star} for an example of such a procedure. Suppose on the contrary that in the last step we do not arrive at $K_{1,n-1}$. Let $T'$ be the tree achieved in the last step. In this case, there are at least two internal vertices. Without loss of generality, let $u_{1}$ and $u_{2}$ be two distinct internal vertices in $T'$. Let $u_{3}$ be the unique neighbor of $u_{1}$ in the unique path between $u_{1}$ and $u_{2}$ in $T'$. Let $T'_{1}$ and $T'_{2}$ be the two subtrees respectively rooted at $u_{1}$ and $u_{3}$ in the forest which is obtained by deleting the edge $u_{1}u_{2}$ from $T'$ (see Fig.~\ref{star}).  Now one can do a $\pi$-transformation to obtain a new tree $T''$ defined as $T''=T\setminus\{(u_{1},x):x\in N_{T'_{1}}(u_{1})\}\cup\{(u_{3},y):y\in N_{T'_{1}}(u_{1})\}$. This contradicts the fact that $T'$ is achieved in the last step. Hence, in the last step we must arrive at $K_{1,n-1}$.

 \begin{figure}
  \centering
  \subfigure[]{
    \label{star} 
     \epsfig{file=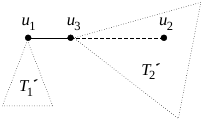,scale=1.5}
     }
  \hspace{1in}
  \subfigure[]{
    \label{path} 
    \epsfig{file=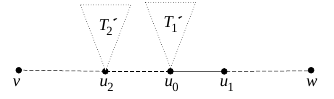, scale = 1.2}
    }
  \caption{To prove the bounds on general graphs}
  \label{bound-general} 
\end{figure}

By Theorem~\ref{alpha-trans-not-decrease}, during the process, each $\pi$-transformation does not increase the average Steiner $3$-eccentricity. Hence $\aec_{3}(K_{1,n-1}) \le \aec_3(T)$.

\begin{figure}[ht!]
\begin{center}
\epsfig{file=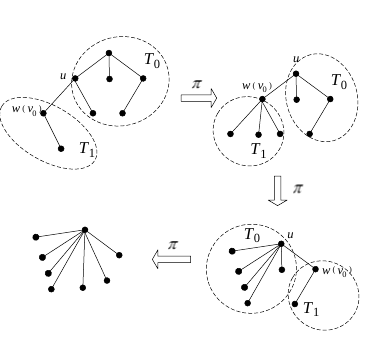, scale=1.5}
\end{center}
\vspace{-0.5cm}
\caption{Transforming a tree with a sequences of $\pi$-transformations to a star}
\label{fig:to-star}
\end{figure}

On the other hand, repeatedly apply the $\pi^{-1}$-transformation on $T$ as long as it is possible. We claim that in the last step we must necessarily arrive at the path $P_{n}$, see Fig.~\ref{fig:to-path} for an example. Suppose on the contrary that in the last step we do not arrive at $P_{n}$. Let $T'$ be the tree obtained in the last step. Then $T'$ contains at least one vertex of degree at least $3$, let $u_{0}$ be such a vertex. Let $u_{1}$ be the unique neighbor of $u_{0}$ on a longest path starting at $u_{0}$ in $T'$, and let $w$ be the other endpoint of the longest path, see Fig.~\ref{path}. Let $v$ be the leave of $T'$ such that the path between $v$ and $u_{0}$ has the smallest length among all paths starting at $u_{0}$ to all leaves. If the path between $v$ and $u_{0}$ is a pendent path of $T'$, then let $u_{2}$ be the neighbor of $u_{0}$ on this path. Otherwise, let $u_{2}$ be a branching vertex of $T'$ such that $u_{2}$ is on the path between $v$ and $u_{0}$ and the path between $v$ and $u_{2}$ is a pendent path of $T'$, see Fig. \ref{path}.   Hence the length of the pendent path between $v$ and $u_{2}$ is strictly less than that between $u_{0}$ and $w$. This implies that one more $\pi^{-1}$-transformation can be done on $T'$.  This contradicts the fact that $T'$ is the tree obtained in the last step. So we finally must arrive at the path $P_{n}$.

\begin{figure}[ht!]
\begin{center}
\epsfig{file=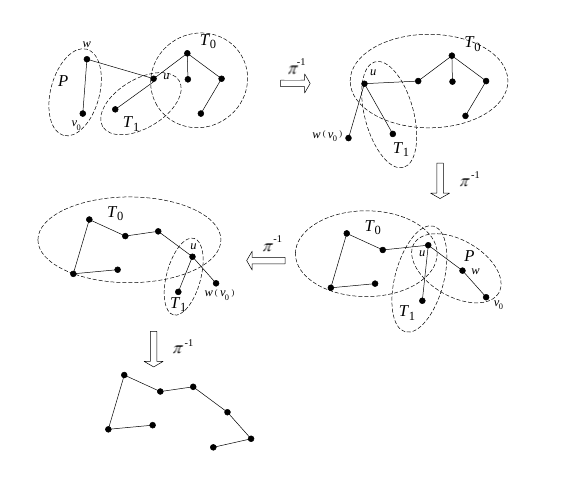, scale=1.3}
\end{center}
\vspace{-1.0cm}
\caption{Transforming a tree with a sequences of $\pi^{-1}$-transformations to a path}
\label{fig:to-path}
\end{figure}

 By Corollary~\ref{inverse-alpha-trans}, at each step of this process  the average Steiner $3$-eccentricity does not decrease, hence $\aec_3(P_{n})\ge \aec_3(T)$. Using the values from Proposition~\ref{aecc-on-special-graphs} we thus have $3 - \frac{1}{n} = \aec(S_{1,n-1}) \le \aec_3(T) \le \aec_3(P_n) = n - 1$ and we are done.
\end{proof}

\subsection{An upper bound on trees with maximum degree}

A {\em broom} $B(n, \Delta)$ is a tree obtained from $K_{1,\Delta}$ by attaching a path of length $n-\Delta-1$ to an arbitrary pendent vertex of the star. See Fig.~\ref{broom} for an example.

\begin{figure}[ht!]
\begin{center}
\epsfig{file=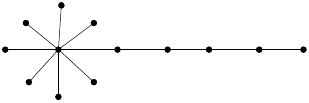, scale=1.3}
\end{center}
\vspace{-0.5cm}
\caption{The broom $B(13,8)$}
\label{broom}
\end{figure}

\begin{theorem}\label{max-deg-tree-upper-bound}
If $T$ is a tree of order $n = n(T)$ and maximum degree $\Delta = \Delta(T)$, then
$$\aec_{3}(T)\leq \aec_{3}(B(n,\Delta))=n-\Delta + 1 + \frac{\Delta}{n}\,.$$
\end{theorem}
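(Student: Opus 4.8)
The plan is to argue that among all trees of order $n$ with maximum degree $\Delta$, the broom $B(n,\Delta)$ maximizes the average Steiner $3$-eccentricity, and then to compute $\aec_3(B(n,\Delta))$ directly. For the extremal part I would use the $\pi^{-1}$-transformation from Section~\ref{sec:trans} (Corollary~\ref{inverse-alpha-trans}), which never decreases $\aec_3$. Starting from an arbitrary tree $T$ with $\Delta(T)=\Delta$, I would repeatedly apply $\pi^{-1}$-transformations; the key observation is that such a transformation can be chosen so that it never creates a vertex of degree exceeding $\Delta$ and never destroys the (unique, when it is the max-degree vertex we preserve) vertex of degree $\Delta$. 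More precisely: pick a branching vertex $w$ that is not the chosen degree-$\Delta$ vertex and ``pour'' one of its branches down a pendant path issuing from it. Iterating, every branching vertex other than the distinguished one gets eliminated, so the process terminates at a tree with exactly one branching vertex of degree $\Delta$ and all other vertices of degree $\le 2$ — that is, a \emph{spider} with $\Delta$ legs. Among spiders with $\Delta$ legs and $n$ vertices one more round of $\pi^{-1}$-transformations (lengthening one leg at the expense of shortening another down to length $1$) drives all but one leg to length $1$, yielding exactly $B(n,\Delta)$. By Corollary~\ref{inverse-alpha-trans} each step does not decrease $\aec_3$, so $\aec_3(T)\le \aec_3(B(n,\Delta))$.

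The remaining task is the explicit evaluation $\aec_3(B(n,\Delta))=n-\Delta+1+\frac{\Delta}{n}$. Label the center of the star $c$, the $\Delta-1$ pendant leaves attached directly to $c$ as $z_1,\dots,z_{\Delta-1}$, and the attached path as $c=u_0,u_1,\dots,u_{n-\Delta}$ (so $u_{n-\Delta}$ is the far leaf). For each vertex $v$ I compute $\ec_3(v,B(n,\Delta))$ using Lemmas~\ref{longest-shortest-path-preserve} and~\ref{third-terminal-property}: a longest path from $v$ together with the farthest-from-that-path vertex. For $v=u_j$ on the path, the longest path from $v$ goes to the far end $u_{n-\Delta}$ if $j$ is small, or back through $c$ to $u_{n-\Delta}$ otherwise, and the third terminal picks up one extra star edge; one finds $\ec_3(u_j,B)= (n-\Delta) + 1$ for the path vertices with the exception of a boundary correction near the far leaf, and $\ec_3(z_i,B)=(n-\Delta)+1$ for each star leaf, while $\ec_3(c,B)=(n-\Delta)+1$ as well — the uniform value $n-\Delta+1$ holding for all but one vertex (the extreme leaf $u_{n-\Delta}$, whose value is $n-\Delta$, there being no extra branch beyond the path plus one star edge), which is exactly what produces the averaged correction term. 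Summing, $\sum_v \ec_3(v,B) = (n-1)(n-\Delta+1) + (n-\Delta) = n(n-\Delta+1) - \Delta$, whence $\aec_3(B(n,\Delta)) = n-\Delta+1-\frac{\Delta}{n}$; I would recheck the boundary cases carefully, since the stated answer has $+\frac{\Delta}{n}$, so the per-vertex bookkeeping near $c$ and near $u_{n-\Delta}$ must be done with the exact small cases ($\Delta=n-1$, $\Delta=n-2$) verified against $K_{1,n-1}$ and the broom with a path of length $1$.

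The main obstacle I anticipate is the extremal argument, specifically verifying that the $\pi^{-1}$-transformation can always be applied in a degree-constrained way until one reaches $B(n,\Delta)$: one must check that the hypotheses of the $\pi$-transformation in Fig.~\ref{alpha-trans} (the pendant path $P$ with $0\le m(P)<\ec_2(u,T_0)$, and the ``Otherwise'' branch of Corollary~\ref{inverse-alpha-trans} giving strict, or at least non-strict, monotonicity) are met at each stage, and that the terminal tree of the process — forced to have no applicable $\pi^{-1}$-transformation — is exactly $B(n,\Delta)$ and not some other spider. This is the place where one needs a clean structural lemma: a tree on $n$ vertices with maximum degree $\Delta$ admits no $\pi^{-1}$-transformation if and only if it is $B(n,\Delta)$ or (when $\Delta=2$) $P_n$, which is consistent with Theorem~\ref{bounds-on-general-trees}. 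The eccentricity computation itself is routine once the boundary cases are pinned down.
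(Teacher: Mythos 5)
Your extremal argument follows essentially the same route as the paper: fix the degree-$\Delta$ vertex $r$, apply $\pi^{-1}$-transformations inside the branches hanging at $r$ until each becomes a path (so the tree is a spider with $\Delta$ legs), then consolidate legs to reach $B(n,\Delta)$, with Corollary~\ref{inverse-alpha-trans} guaranteeing that $\aec_3$ never decreases. Your worry about degree preservation is legitimate and is exactly what the paper's device of rooting at $r$ and only transforming within the subtrees handles; in the leg-consolidation phase the branching vertex migrates one step along the shrinking leg at each application, which is why its degree $\Delta$ survives until the broom is reached.

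The genuine gap is in the evaluation of $\aec_{3}(B(n,\Delta))$, which is half of the statement and which you leave unresolved with the wrong sign on $\frac{\Delta}{n}$: your per-vertex bookkeeping undercounts the leaves. Write $c$ for the star center, $z_{1},\dots,z_{\Delta-1}$ for the star leaves, and $c=u_{0},u_{1},\dots,u_{n-\Delta}$ for the spine. For a star leaf $z_{i}$, the longest path starting at $z_{i}$ is $z_{i}cu_{1}\cdots u_{n-\Delta}$ of length $n-\Delta+1$ (not $n-\Delta$), and the third terminal is some $z_{j}$ with $j\ne i$ (which exists since $\Delta\ge 3$) at distance $1$ from that path, so $\ec_{3}(z_{i},B)=n-\Delta+2$. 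The same holds for the far leaf $u_{n-\Delta}$: its longest path ends at some $z_{i}$ and has length $n-\Delta+1$, and another $z_{j}$ adds one more edge, so $\ec_{3}(u_{n-\Delta},B)=n-\Delta+2$; your value $n-\Delta$ is impossible, since already $\ec_{2}(u_{n-\Delta},B)=n-\Delta+1$. Every non-leaf vertex does get $\ec_{3}=n-\Delta+1$, as you say. Hence the sum is $\Delta(n-\Delta+2)+(n-\Delta)(n-\Delta+1)=n(n-\Delta+1)+\Delta$, giving $\aec_{3}(B(n,\Delta))=n-\Delta+1+\frac{\Delta}{n}$, with the plus sign. (For $\Delta=2$ the broom degenerates to $P_{n}$ and the closed formula fails, so the statement is really about $\Delta\ge 3$; this affects the paper's proof equally.)
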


\begin{proof}
Let $T$ be a tree with $n = n(T)$ and $\Delta = \Delta(T)$, and let $r$ be the vertex of $T$ with degree $\Delta$. Consider $T$ as a tree rooted in $r$. Let $T_{1},\ldots, T_{\Delta}$ be the maximal subtrees of $T$ each of which do not contain $r$, but exactly one of the neighbor is $r$. We may also consider these $\Delta$ trees to be rooted at $r$. Repeatedly apply the $\pi^{-1}$-transformation on each subtree $T_{i}$, until no more such transformations could be done. By the proof of Theorem \ref{bounds-on-general-trees}, in the last step, every $T_{i}$ becomes a path. Let $T'$ be the final tree in this procedure.

When all subtrees turn into paths, we can further proceed the $\pi^{-1}$-transformation on $T'$ until we arrive at the broom $B(n,\Delta)$, see Fig.~\ref{fig:to-broom} for an example. Now we will elaborate the process.

\begin{figure}[ht!]
\begin{center}
\epsfig{file=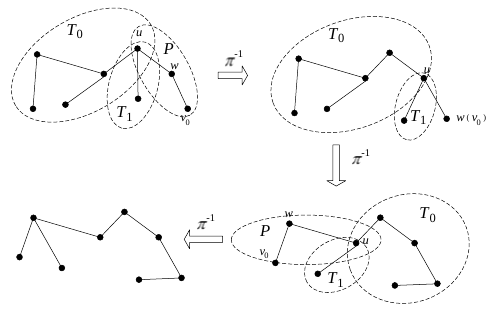, scale=1.5}
\end{center}
\vspace{-0.5cm}
\caption{Transforming a tree with a sequences of $\pi^{-1}$-transformations to a broom}
\label{fig:to-broom}
\end{figure}

One can imagine that $T'$ is a tree in which there are $\Delta$ paths attached to a star $K_{1,\Delta}$. The star has the central vertex $r$. If there is no more than one neighbor of $r$ each of which has degree $2$, then $T'$ is just a broom. In this case, the theorem holds trivially. Now we consider that there are at least two neighbors in $N_{T'}(r)$ each of which has degree $2$. Without loss of generality, let $r_{1}, r_{2}\in N_{T'}(r)$ be two distinct neighbors of $r$, see Fig.~\ref{arrive-broom}. In ťhe figure, $u$ and $v$ are both leaves of $T'$.  If the size of the path between $r_{1}$ and $u$ is less than that between $r$ and $v$, then apply a $\pi^{-1}$-transformation on $T'$ to obtain a new tree $T''$ with  $T'' = T'\setminus\{(r,x):x\in N_{T_{1}'}(r)\}\cup \{(r_{1},y):y\in N_{T_{1}'}(r)\}$. Otherwise, also apply a $\pi^{-1}$-transformation on $T'$ to obtain another new tree $T'''$ with $T''' = T'\setminus\{(r,x):x\in N_{T_{1}'}(r)\}\cup \{(r_{2},y):y\in N_{T_{1}'}(r)\}$. Whenever there are two such different neighbors of $r$, whose degrees are both $2$, we employ the above procedure on the tree.

\begin{figure}[ht!]
\begin{center}
\epsfig{file=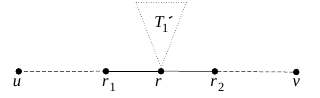, scale=1.5}
\end{center}
\vspace{-0.5cm}
\caption{To achieve a broom}
\label{arrive-broom}
\end{figure}

We claim that in the last step we will achieve a broom with maximum degree $\Delta$. Obviously, the degree of the center vertex in the star $K_{1,\Delta}$ remains to be $\Delta$ all the way in the whole procedure. So the maximum degree of the tree obtained in the last step is $\Delta$. Now we show that in the final step we must arrive at a broom. Suppose on the contrary that we do not achieve a broom in the last step. Let $T_{1}$ be the tree in the last step and $r$ be the central vertex of the star $K_{1,\Delta}$. In this case, there must be two neighbors of $r$ each of which has degree $2$. This means that one more $\pi^{-1}$-transformation can be done on $T_{1}$, just like the procedure in the above paragraph. This contradicts the fact that the tree is achieved in the last step. So finally we must achieve a broom.

By Corollary~\ref{inverse-alpha-trans}, during the whole process the Steiner $3$-eccentricity does not decrease. This implies that $\aec_{3}(T)\leq \aec_{3}(B(n,\Delta))$. Finally, the broom $B = B(n,\Delta)$ has $\Delta$ leaves, and $\ec_3(v,B) = n - \Delta + 2$ holds for each of its leaves $v$. For each of the other $n-\Delta$ vertices $w$ of $B$ we have $\ec_3(w,B) = n - \Delta + 1$. Hence $\aec_3(B) = (\Delta(n-\Delta+2) + (n-\Delta)(n-\Delta+1))/n = n + 1 - \Delta + \frac{\Delta}{n}$, and we are done.
\end{proof}

\subsection{A lower bound on trees with constant number of leaves}

A {\em starlike tree} is a tree with exactly one vertex of degree at least three. In other words, a starlike tree is a tree obtained by attaching to an isolated vertex $t\ge 3$ pendant paths. If the lengths of these pendant paths pairwise differ by at most one, then the starlike tree is called \emph{balanced}. Note that if $T$ is a balanced starlike tree of order $n$ and with $p$ leaves, then it is uniquely determined (up to isomorphism); we will denote it by $BS_{n,p}$.

\begin{theorem}\label{lower-bound-on-given-num-leaves}
Let $T$ be a tree of order $n\ge 2$ and with $p$ pendent vertices. Then
$$\aec_{3}(T)\geq \aec_{3}(BS_{n,p})\,.$$
\end{theorem}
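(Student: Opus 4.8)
The plan is to reduce an arbitrary tree $T$ with $n$ vertices and $p$ pendent vertices to the balanced starlike tree $BS_{n,p}$ through a sequence of $\pi$-transformations, each of which keeps the number of leaves equal to $p$ and, by Theorem~\ref{alpha-trans-not-decrease}, does not increase $\aec_3$. The cases $p=2$ (here $T=P_n=BS_{n,2}$) and $p=n-1$ (here $T=K_{1,n-1}=BS_{n,n-1}$) are immediate, so I assume $3\le p\le n-2$. The essential difficulty is that a general $\pi$-transformation need not preserve the number of pendent vertices — it can, e.g., turn a degree-$2$ vertex into a branching vertex — so I work only with the sub-family of $\pi$-transformations in which the pendent path $P$ is the single vertex $w$ (so $v_0=w$ and $m(P)=0$) and the re-attachment vertex $u$ is \emph{not} a leaf; I check that such moves keep the leaf set unchanged.

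\textbf{Step 1 (reduction to a starlike tree).} While $T$ has at least two branching vertices, form the ``branching tree'' $T_B$ obtained from $T$ by contracting every maximal path all of whose internal vertices have degree $2$; pick a leaf $w$ of $T_B$ and let $u$ be the branching vertex of $T$ nearest to $w$, so that every branch at $w$ except the one containing $u$ is a pendent path (and there are at least two such pendent paths, since $\deg_T(w)\ge 3$). Successively move all but one of these pendent paths onto $u$, each move being the $\pi$-transformation with $P=\{w\}$, with $T_1$ the current pendent path, and with $u$ as the re-attachment vertex; the requirement $0=m(P)<\ec_2(u,T_0)$ is automatic since $T_0$ has at least two vertices. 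Each move is leaf-preserving (the endpoints of the moved paths stay leaves, $w$ retains the two neighbours toward the kept pendent path and toward $u$ so $\deg(w)$ stays $\ge 2$, and $u$ stays a non-leaf branching vertex), and after the last move $\deg(w)=2$ while no new branching vertex is created, so the number of branching vertices strictly decreases. Iterating, and using Theorem~\ref{alpha-trans-not-decrease} at each step, we reach a starlike tree $S$ with $n$ vertices, $p$ leaves, and $\aec_3(S)\le\aec_3(T)$.

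\textbf{Step 2 (balancing the legs).} Let the $p$ legs of $S$ have lengths $b_1\ge\cdots\ge b_p\ge 1$ with $\sum_i b_i=n-1$. If $b_1\ge b_p+2$, let $w$ be the penultimate vertex of a longest leg (which is not the centre, as $b_1\ge 3$), take $P=\{w\}$, take $T_1$ to be the last leaf of that leg, and re-attach it to the endpoint of a shortest leg; the hypothesis $0<\ec_2(\text{that endpoint},T_0)$ holds since that eccentricity equals $b_p+(b_1-1)>0$. Here $w$ becomes a new leaf while the old endpoint of the shortest leg ceases to be a leaf, so $p$ is preserved, and the move transfers one unit of length from the longest to a shortest leg, hence strictly decreases $\sum_i b_i^2$. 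After finitely many such moves all legs differ in length by at most one, i.e. we have reached $BS_{n,p}$; combining with Step 1 gives $\aec_3(BS_{n,p})\le\aec_3(S)\le\aec_3(T)$, which is the claim.

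The main obstacle is Step 1: one must recognize that the $\pi$-transformation in general changes $\ell(T)$, single out the degenerate instance $P=\{w\}$ with $u$ an already-branching vertex as a leaf-preserving move, and verify that these restricted moves still suffice to kill all branching vertices but one — the accompanying bookkeeping of degrees, leaves, and branching vertices is elementary but must be done carefully (in particular one must make sure $w$ never accidentally becomes a leaf during Step 1, which would spoil the leaf count). Once this is in place, Step 2 is the routine argument that balancing lowers a Schur-convex-type quantity such as $\sum_i b_i^2$, the only point needing attention there being, once more, that the balancing move can be realized as a leaf-preserving $\pi$-transformation.
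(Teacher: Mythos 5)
Your overall strategy (first collapse $T$ to a starlike tree with the same number of leaves, then balance the legs) is sensible, but the individual moves you perform are not instances of the $\pi$-transformation of Fig.~\ref{alpha-trans}, so Theorem~\ref{alpha-trans-not-decrease} cannot be invoked for them and the monotonicity of $\aec_{3}$ along your sequence of trees is unjustified. The first problem is structural: the $\pi$-transformation requires $V(T)$ to be partitioned into a connected subtree $T_{0}$ containing $u$, a subtree $T_{1}$ containing $w$ with $u$ and $w$ \emph{adjacent}, and the pendent path $P$ at $w$. With your choice $P=\{w\}$ and $T_{1}$ equal to a single pendent path at $w$, the remaining pendent paths at $w$ (the ones you intend to keep there) have nowhere to live: they cannot belong to $T_{0}$, since they meet the rest of the tree only at $w\notin V(T_{0})$. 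The only admissible decomposition with $P=\{w\}$ puts \emph{all} branches of $w$ other than the edge $wu$ into $T_{1}$; the transformation then strips $w$ down to a leaf, so $\ell(T)$ is not preserved. The leaf-preserving variant is to let $P$ be one of the pendent paths at $w$ and $T_{1}$ the union of $w$ with the others, but then the hypothesis $m(P)<\ec_{2}(u,T_{0})$ is a genuine restriction rather than being ``automatic'': e.g.\ if $w$ carries two pendent paths of length $10$ and its neighbour $u$ carries only two pendent edges, the condition fails and $w$ cannot be processed (one must work from the other branching vertex instead). So the bookkeeping you flag as ``elementary but careful'' is in fact where the argument currently breaks.

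The second problem is that in both steps your re-attachment vertex $u$ is in general not adjacent to $w$: in Step 1 you take $u$ to be the nearest branching vertex, which may be separated from $w$ by degree-two vertices, and in Step 2 you take $u$ to be the endpoint of a shortest leg while $w$ is the penultimate vertex of a longest leg. The $\pi$-transformation only relocates $T_{1}\setminus\{w\}$ from $w$ to a \emph{neighbouring} vertex $u$ --- this adjacency is exactly what makes the ``$\ec_{3}$ changes by exactly $1$'' computations in the proof of Theorem~\ref{alpha-trans-not-decrease} valid --- so a long-range reattachment is simply not covered by that theorem. Both of your steps can be realised by genuine $\pi$-transformations, but differently: for instance, to balance a starlike tree with centre $c$ and leg lengths $b_{1}\ge\cdots\ge b_{p}$, take $w=c$, let $u$ be the first vertex of a longest leg $L_{1}$, let $T_{0}=L_{1}\setminus\{c\}$, let $P$ be a shortest leg $L_{p}$, and let $T_{1}$ consist of $c$ and the remaining legs; the hypothesis $m(P)<\ec_{2}(u,T_{0})$ becomes $b_{p}\le b_{1}-2$, and the move transfers one unit of length from $L_{1}$ to $L_{p}$ while keeping $p$ leaves. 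Rebuilding Steps 1 and 2 out of such admissible local moves (and handling the configurations where the eccentricity hypothesis blocks the move, which the paper itself acknowledges as a ``stuck'' state) is the substantive work that your write-up is missing.
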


\begin{proof}
 If $T$ has more than two branching vertices, by Lemma \ref{pendent-trans-2}, one can do a pendent $\pi$-transformation on $T$. Recall that the $\pi$-transformation does not increase the average Steiner $3$-eccentricity.

 Now suppose $T$ has exact two branching vertices. If the distance of these two branching vertices is at least two, then by Lemma \ref{pendent-trans-1}, one can do a pendent $\pi$-transformation on $T$. If the distance of these two branching vertices is exact one, and there are two pendent paths such that they have different length and attach to distinct branching vertices, then by Lemma \ref{pendent-trans-3}, one can also do a pendent $\pi$-transformation on $T$. All the processes does not increase the average Steiner $3$-eccentricity. If the distance of these two branching vertices is exact one, but every pendent path has the same length, then by Theorem \ref{maintain-ecc-tau-trans}, one can do a $\tau$-transformation to obtain a new tree with only one branching vertices.

 Now we consider the case that there is an unique branching vertex in the tree $T$. If there are two distinct pendent paths such that their lengths differ by at most one, then $T$ is trivially a balanced star-like tree. Otherwise,  by Lemma \ref{pendent-trans-4}, one can also do a $\pi$-transformation on $T$.

 To sum all, all the transformations do not increase the average Steiner $3$-eccentricity. Finally we arrive at the starlike tree $BS_{n,p}$, see Fig.~\ref{fig:to-balance} for an example.
\begin{figure}
\begin{center}
\epsfig{file=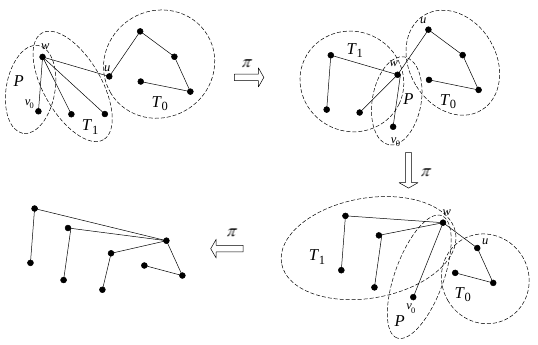, scale=1.5}
\end{center}
\vspace{-0.5cm}
\caption{Transforming a tree with a sequences of $\pi$-transformations to a balanced starlike tree}
\label{fig:to-balance}
\end{figure}

Since in all the transformations made to reach $BS_{n,p}$ the average Steiner $3$-eccentricity has not increased, we conclude that $\aec_{3}(BS_{n,p}) \le \aec_{3}(T)$.
\end{proof}

\subsection{Lower bounds on trees with matching and independence number}

If $m\ge 3$ and $m+2\le n\le 2m+1$, then let $T_{n,m}$ be a tree obtained from $K_{1,m}$ by respectively adding a pendent edge to its $n-m-1$ pendent vertices. Hence we add a leaf to at least one and not more than $m$ leafs of $K_{1,m}$. Note also that $n(T_{n,m}) = n$. Observe further that $\alpha(T_{n,m}) = m$ and $\beta(T_{n,m}) = n-m$, where $\alpha(G)$ and $\beta(G)$ are the independence and the matching number of $G$, respectively.

\begin{theorem}\label{lower-bound-on-given-max-match}
If $T$ is a tree with $n = n(T)$ and $\beta = \beta(T) \geq 2$, then
$$\aec_{3}(T)\geq \aec_{3}(T_{n,n-\beta})\,.$$
\end{theorem}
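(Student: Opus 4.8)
The plan is to mimic the strategy used for the previous bounds (Theorems~\ref{bounds-on-general-trees}--\ref{lower-bound-on-given-num-leaves}): show that any tree $T$ with $n$ vertices and independence number $\beta$ can be transformed, by a sequence of $\pi$-transformations, into $T_{n,n-\beta}$, and then invoke Theorem~\ref{alpha-trans-not-decrease} to conclude that $\aec_3$ does not increase along the way. Writing $m = n-\beta$ (the matching number) to match the indexing of $T_{n,m}$, the target tree $T_{n,m}$ is the ``double star / spider'' obtained from $K_{1,m}$ by subdividing $n-m-1 = \beta - 1$ of its edges once; so its pendant paths all have length $1$ or $2$, with exactly $\beta-1$ of length $2$ and $m-(\beta-1) = 2m-n+1$ of length $1$. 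The key point that makes the target ``extremal'' is that among trees with a fixed number $n$ of vertices and fixed matching number $m$, the tree $T_{n,m}$ has the most ``compact'' shape reachable by $\pi$-transformations.

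First I would record the elementary facts that $\alpha(T_{n,m}) = m$ and $\beta(T_{n,m}) = n-m$ (already noted in the excerpt), and, more importantly, that the $\pi$-transformation preserves the matching number: indeed $\pi$ moves a pendant-path-bundle $T_1$ from a vertex $w$ on a pendant path to the vertex $u$ deeper in $T_0$, which is a local re-rooting that does not change the size of a maximum matching — this needs a short argument comparing maximum matchings before and after, using that $w$ lies on a pendant path and so has a predictable role in any maximum matching. Granting this, $\beta$ is an invariant of the whole reduction process. Then, starting from an arbitrary $T$ with $n(T)=n$ and $\beta(T)=\beta$, I would apply $\pi$-transformations to: (i) collapse all branching structure toward a single branching vertex $r$, turning $T$ into a starlike tree (spider) as in the proof of Theorem~\ref{lower-bound-on-given-num-leaves}; and then (ii) balance the pendant paths. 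Since the matching number is preserved and the balanced spider with matching number $m$ on $n$ vertices is forced to have all legs of length $1$ or $2$ (any leg of length $\ge 3$ would allow a further $\pi$-transformation shortening it, and a leg of length $3$ versus two of length $1$ would change $\beta$ unless balanced as in $T_{n,m}$), the process terminates precisely at $T_{n,m} = T_{n,n-\beta}$. At every step Theorem~\ref{alpha-trans-not-decrease} gives $\aec_3(\pi(T)) \le \aec_3(T)$, hence $\aec_3(T_{n,n-\beta}) \le \aec_3(T)$.

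The main obstacle I anticipate is \emph{not} the $\aec_3$ monotonicity — that is handed to us by Theorem~\ref{alpha-trans-not-decrease} — but rather pinning down exactly which spider the $\pi$-reduction terminates at, and verifying it equals $T_{n,n-\beta}$. Two things must be checked carefully: that $\pi$ genuinely preserves $\beta$ (equivalently $\alpha$), and that a ``balanced'' spider with a prescribed matching number is unique and coincides with $T_{n,m}$ — in particular that one cannot get stuck at a spider with, say, some legs of length $3$ (which would have a different matching number, contradicting invariance) or at a non-spider with two branching vertices (handled, as in Theorem~\ref{lower-bound-on-given-num-leaves}, by a final matching-number-preserving re-attachment of pendant paths). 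I would also double-check the boundary hypotheses of the $\pi$-transformation (the condition $0 \le m(P) < \ec_2(u,T_0)$ and that the relevant pendant path is genuinely pendant) are met at each reduction step, exactly as in the earlier applications. Once the reduction target is confirmed to be $T_{n,n-\beta}$, the inequality follows immediately, with no need to compute $\aec_3(T_{n,n-\beta})$ explicitly.
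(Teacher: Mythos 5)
There is a genuine gap. The linchpin of your argument is the claim that the $\pi$-transformation preserves the matching number, so that the reduction process is forced to halt at $T_{n,n-\beta}$. This is false. The $\pi$-transformation is precisely the move that the proof of Theorem~\ref{bounds-on-general-trees} iterates to collapse \emph{any} tree to the star $K_{1,n-1}$, and that collapse sends the matching number to $1$. For a concrete single step: in a spider with centre $c$, a leg $c\,a\,b$ of length two and at least one further leg, taking $u=c$, $w=a$, $T_1$ the edge $ab$ and $P$ trivial satisfies $0=m(P)<\ec_2(u,T_0)$, and the transformation reattaches $b$ to $c$, dropping the matching number by one. (Your ``balancing'' step is not safe either: the spider on $7$ vertices with legs of lengths $3,1,1,1$ has matching number $2$, while its balanced version with legs $2,2,1,1$ has matching number $3$.) Without an invariant there is nothing to stop the reduction short of $K_{1,n-1}$, so your argument only recovers the trivial bound of Theorem~\ref{bounds-on-general-trees}. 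A secondary point: for the statement to parse, $\beta$ must be the matching number (the paper's sentence defining $\alpha$ and $\beta$ contradicts its own proofs); with $\beta$ read as the independence number, as you do, the tree $T_{n,n-\beta}$ is typically undefined, since $K_{1,n-\beta}$ would have too few pendant vertices to subdivide.

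The paper's proof takes a different and shorter route that needs no invariance. Fix a maximum matching $M$ with $|M|=\beta$. For $n\ge 3$ each edge of $M$ covers at most one leaf, and at most $n-2\beta$ vertices are uncovered, so $\ell(T)\le\beta+(n-2\beta)=n-\beta$. Theorem~\ref{lower-bound-on-given-num-leaves} gives $\aec_3(T)\ge\aec_3(BS_{n,\ell})$, and since $\ell\le n-\beta$, further $\pi$-transformations that increase the number of legs of a balanced spider give $\aec_3(BS_{n,\ell})\ge\aec_3(BS_{n,n-\beta})$; finally $BS_{n,n-\beta}=T_{n,n-\beta}$, because the balanced spider with $n-\beta$ legs on $n$ vertices has exactly $\beta-1$ legs of length two and the rest of length one. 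If you replace your invariance claim by this leaf-count estimate and route the argument through Theorem~\ref{lower-bound-on-given-num-leaves}, the proof closes.
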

\begin{proof}
Let $M$ be a maximum matching of $T$, so that $|M| = \beta$. Set further $\ell = \ell(T)$. If $e\in M$, then at most one of the endpoints of $e$ is a leaf, hence $\ell  \leq \beta+(n-2\beta)=n-\beta$. By  Theorem~\ref{lower-bound-on-given-num-leaves}, we have $\aec_{3}(T)\geq \aec_{3}(BS_{n,\ell})$. Applying Theorem~\ref{alpha-trans-not-decrease} again we  can then estimate that
$$\aec_{3}(T)\geq \aec_{3}(BS_{n,\ell}) \geq \aec_{3}(BS_{n,n-\beta}) = \aec_{3}(T_{n,n-\beta})\,,$$
and we are done.
\end{proof}

For trees with perfect matchings, Theorem~\ref{lower-bound-on-given-max-match} together with a straightforward computation of $\aec_{3}(T_{n,n/2})$ yields the following consequence.

\begin{corollary}\label{lower-bound-on-given-perfect-match}
If $T$ be a tree of order $n$ with a perfect matching, then
     $$\aec_{3}(T)\geq \aec_{3}(T_{n,n/2})=\begin{cases}
     3;                                   &  n=4,\\
     \frac{9}{2};                         &  n=6,\\
     \frac{11}{2}-\frac{2}{n};            &  n\geq 8.
     \end{cases}$$
\end{corollary}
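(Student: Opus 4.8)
The plan is to derive Corollary~\ref{lower-bound-on-given-perfect-match} directly from Theorem~\ref{lower-bound-on-given-max-match} by specializing to the case $\beta = \beta(T) = n/2$, which is exactly the condition that $T$ has a perfect matching. Indeed, if $T$ has a perfect matching $M$, then $|M| = n/2$, and since a perfect matching saturates every vertex, the complement of $M$ in the vertex set is empty, so the independence number argument in the proof of Theorem~\ref{lower-bound-on-given-max-match} forces $\beta(T) = n/2$ (one cannot do better, as $\beta \le n/2$ always and the matching $M$ certifies $\beta \ge n/2$ in the matching-number sense; here I should be careful about the notational convention of the paper, where $\beta$ denotes the independence number and $\alpha$ the matching number — so I must restate this as: $\alpha(T) = n/2$, hence $\beta(T) = n - \alpha(T) = n/2$ as well, and $n - \beta(T) = n/2$). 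Plugging $\beta = n/2$ into Theorem~\ref{lower-bound-on-given-max-match} yields immediately $\aec_3(T) \ge \aec_3(T_{n,n/2})$.

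It then remains to compute $\aec_3(T_{n,n/2})$ explicitly. Recall that $T_{n,n/2}$ is obtained from $K_{1,n/2}$ by adding a pendent edge to $n/2 - 1$ of its pendent vertices; in other words, it is the spider with one leg of length $1$ and $n/2 - 1$ legs of length $2$. First I would handle the small cases $n=4$ (which gives the path $P_4$, so $\aec_3(P_4) = 3$ by Proposition~\ref{aecc-on-special-graphs}) and $n=6$ separately by direct enumeration of $\ec_3(v, T_{6,3})$ over all six vertices. For the generic case $n \ge 8$, I would split the vertices of $T = T_{n,n/2}$ into four types: the center $r$, the unique neighbor of $r$ that is a leaf (call it $z$), the $n/2 - 1$ middle vertices of the length-$2$ legs, and the $n/2 - 1$ outer leaves of the length-$2$ legs. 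For each type I would identify a longest path from the vertex (using Lemma~\ref{longest-shortest-path-preserve}) and then the optimal third terminal (using Lemma~\ref{third-terminal-property} and Lemma~\ref{equal-ecc-for-diff-longest-path-of-same-verte}), obtaining $\ec_3$ for that vertex; for instance, from an outer leaf the longest path goes to another outer leaf and has length $4$, and the third terminal adds the depth of a third leg, namely $2$, giving $\ec_3 = 6$ for $n \ge 8$ (since then there are at least three legs of length $2$). Summing $\sum_v \ec_3(v,T)$ over the four vertex classes and dividing by $n$ gives $\aec_3(T_{n,n/2}) = \tfrac{11}{2} - \tfrac{2}{n}$ for $n \ge 8$.

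Finally I would argue that the bound is attained: $T_{n,n/2}$ itself has a perfect matching (pair each outer leaf with its middle vertex, and pair $r$ with $z$), so it is a legitimate member of the family over which the infimum is taken, whence the stated value is indeed the minimum. The only genuinely computational part is the case analysis for $\aec_3(T_{n,n/2})$, and the main (mild) obstacle there is bookkeeping the boundary behavior when $n$ is small — specifically making sure that for $n = 6$ there are only two legs of length $2$, so a third terminal cannot reach depth $2$, which is why that case produces $9/2$ rather than $11/2 - 2/n$; and for $n = 4$ the tree degenerates to $P_4$. Everything else is a routine application of the structural lemmas from Section~\ref{sec:preliminary-results} already proved in the excerpt. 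I expect no conceptual difficulty, only the need for care at the small cases, so the writeup will be short.
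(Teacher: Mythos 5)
Your proposal is correct and is exactly the paper's intended argument: the corollary is obtained by specializing Theorem~\ref{lower-bound-on-given-max-match} to a tree with a perfect matching (so the matching number is $n/2$ and the bound becomes $\aec_{3}(T)\geq \aec_{3}(T_{n,n/2})$) followed by a direct computation of $\aec_{3}(T_{n,n/2})$, and your case values $3$, $\frac{9}{2}$, and $\frac{11}{2}-\frac{2}{n}$ all check out. The only quibble is your aside on notation: in Theorem~\ref{lower-bound-on-given-max-match} the symbol $\beta$ is used for the matching number (despite the paper's garbled defining sentence), but since a tree with a perfect matching has matching number and independence number both equal to $n/2$, your conclusion $n-\beta=n/2$ is unaffected.
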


We next give a bound with the independence number of a tree.

\begin{theorem}\label{lower-bound-on-given-max-independent-num}
If $T$ be a tree of order $n$ and $\alpha = \alpha(T)$, then
     $$\aec_{3}(T)\geq \aec_{3}(T_{n,\alpha})\,.$$
\end{theorem}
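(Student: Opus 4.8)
The plan is to mimic the structure of the proof of Theorem~\ref{lower-bound-on-given-max-match}, replacing the matching bound $\ell \le n-\beta$ by a corresponding bound relating the number of leaves $\ell(T)$ to the independence number $\alpha(T)$. The first step is to establish the combinatorial inequality $\ell(T) \le \alpha(T)$ for every tree $T$ with at least two vertices. This should be a short argument: the set of leaves of a tree is always an independent set (no two leaves are adjacent unless $T\cong K_2$, and in that case $\alpha(K_2)=1=\ell(K_2)$ still works), and any independent set has size at most $\alpha(T)$; hence $\ell(T)\le\alpha(T)$. Thus if we write $\ell=\ell(T)$ we have $\ell\le\alpha$.

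Next, I would invoke Theorem~\ref{lower-bound-on-given-num-leaves} to get $\aec_{3}(T)\ge\aec_{3}(BS_{n,\ell})$, reducing the problem to balanced starlike trees. Then, exactly as in the proof of Theorem~\ref{lower-bound-on-given-max-match}, I would use the monotonicity provided by the $\pi$-transformation (Theorem~\ref{alpha-trans-not-decrease}): since $\ell\le\alpha$, one should be able to pass from $BS_{n,\ell}$ to a tree with more leaves, namely $T_{n,\alpha}$, by a sequence of $\pi$-transformations, so that $\aec_{3}(BS_{n,\ell})\ge\aec_{3}(T_{n,\alpha})$. Chaining these inequalities gives $\aec_{3}(T)\ge\aec_{3}(BS_{n,\ell})\ge\aec_{3}(T_{n,\alpha})$, which is the desired conclusion.

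The main obstacle, and the place where the argument is less routine than it first appears, is the last link: one needs $T_{n,\alpha}$ to actually be reachable from $BS_{n,\ell}$ (or more precisely from $BS_{n,\alpha}$) by $\pi$-transformations that do not increase $\aec_3$, and one needs the definition of $T_{n,\alpha}$ to make sense — recall $T_{n,m}$ was only defined for $m\ge 3$ and $m+2\le n\le 2m+1$. So I would need to check that the hypotheses of the theorem (tree of order $n$ with independence number $\alpha$) force $\alpha$ into the admissible range $\alpha+2\le n\le 2\alpha+1$, or else handle the degenerate small cases ($\alpha=n-1$, i.e.\ $T\cong K_{1,n-1}$, and paths/near-paths) separately using Proposition~\ref{aecc-on-special-graphs}. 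Verifying $n\le 2\alpha+1$ amounts to $\alpha\ge(n-1)/2$, which holds for every tree since a tree is bipartite and the larger side of the bipartition is independent of size at least $\lceil n/2\rceil$; verifying $\alpha+2\le n$ excludes only $\alpha=n-1$, i.e.\ the star, which must be treated on its own. Once the range is secured, the comparison $\aec_{3}(BS_{n,\ell})\ge\aec_{3}(BS_{n,\alpha})\ge\aec_{3}(T_{n,\alpha})$ follows from Theorem~\ref{alpha-trans-not-decrease} in the same manner as in the proof of Theorem~\ref{lower-bound-on-given-max-match}, completing the argument.
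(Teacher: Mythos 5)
Your proposal follows exactly the paper's argument: observe $\ell(T)\le\alpha(T)$ because the leaf set is independent, apply Theorem~\ref{lower-bound-on-given-num-leaves} to get $\aec_3(T)\ge\aec_3(BS_{n,\ell})$, and then use the $\pi$-transformation monotonicity to pass from $BS_{n,\ell}$ to $BS_{n,\alpha}\cong T_{n,\alpha}$. Your additional verification that $\alpha$ lies in the admissible range $\alpha+2\le n\le 2\alpha+1$ (with the star handled separately) is more careful than the paper, which simply asserts the chain of inequalities, but it is the same approach.
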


\begin{proof}
Set again $\ell = \ell(T)$. Clearly, $\alpha \ge \ell(T)$.  By Theorem~\ref{lower-bound-on-given-num-leaves}, we have $\aec_{3}(T)\geq \aec_{3}(BS_{n,\ell})$. By the aid of Theorem~\ref{alpha-trans-not-decrease} we conclude that $\aec_{3}(T) \geq \aec_{3}(BS_{n,\ell}) \geq \aec_{3}(BS_{n,\alpha})$.
\end{proof}

\subsection{Lower bounds on trees with constant diameter or radius}

Recall that the {\em diameter} ${\rm diam}(G)$ and the {\em radius} ${\rm rad}(G)$ of a graph $G$ are the maximum and the minimum, respectively, eccentricity of the vertices of $G$.  The {\em center} of $G$ is the set of its vertices with minimum eccentricity. Recall also that the center of a tree consists either of a single vertex or of two adjacent vertices.

Let $T_{n,d}(p_{1},\ldots, p_{d-1})$ be a tree of order $n$ obtained from a path $P_{d+1}=v_{0}v_{1}\ldots v_{d}$ by attaching $p_{i}\geq 0$ pendent vertices to $v_{i}$ for every $i\in [d-1]$. Clearly, as the order of $T'_{n,d}(p_{1},\ldots, p_{d-1})$ is $n$, we must have $\Sigma_{i=1}^{d-1}p_{i}=n-d-1$. In the special case when $d$ is even and all the $n-d-1$ vertices are attached to the vertex $v_{d/2}$, we briefly denote the tree with $T'_{n,d}$.  Similarly, if $d$ is odd, then let $T'_{n,d}$ denote the graph in which $\lfloor (n-d-1)/2\rfloor$ vertices are attached to $v_{\lceil d/2\rceil}$ and $\lceil (n-d-1)/2\rceil$ vertices are attached to $v_{\lceil d/2\rceil}$.

\begin{theorem}\label{lower-bound-on-given-daameter}
If $T$ is a tree of order $n$ and ${\rm diam}(T) = d$, then
$$\aec_{3}(T)\geq \aec_{3}(T'_{n,d})\,.$$
\end{theorem}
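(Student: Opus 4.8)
The plan is to follow the same template already used for Theorems~\ref{bounds-on-general-trees}, \ref{max-deg-tree-upper-bound}, and~\ref{lower-bound-on-given-num-leaves}: start from an arbitrary tree $T$ of order $n$ with ${\rm diam}(T)=d$, and show that repeated application of the $\pi$-transformation (which by Theorem~\ref{alpha-trans-not-decrease} never increases $\aec_3$) drives $T$ to the extremal tree $T'_{n,d}$ while keeping the diameter equal to $d$. First I would fix a diametral path $Q = v_0 v_1 \ldots v_d$ in $T$, so that every vertex of $T$ lies in one of the subtrees $T_i$ hanging off $v_i$ (with $T_i$ rooted at $v_i$); note that for $1\le i\le d-1$ each $T_i$ has depth at most $\min\{i, d-i\}$, since otherwise $Q$ would not be diametral.

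The core of the argument is a sequence of $\pi$-transformations that, for each $i$ with $1\le i\le d-1$, collapses the subtree $T_i$ down to a bunch of pendent vertices attached directly to $v_i$, leaving the path $Q$ intact; this is exactly the ``balancing/collapsing'' move illustrated in Figures~\ref{fig:to-star} and~\ref{fig:to-broom}, applied within each $T_i$ rather than globally. One must check that the hypotheses of the $\pi$-transformation (the pendent path $P$ inside $T_i$ satisfies $m(P) < \ec_2(u,T_0)$, etc.) can always be met as long as $T_i$ is not yet a star centered at $v_i$, and that performing the move does not lengthen any path through $v_i$ beyond what $Q$ already provides — hence ${\rm diam}$ stays $d$. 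After this phase we have reached some $T_{n,d}(p_1,\ldots,p_{d-1})$. The second phase slides pendent vertices along $Q$ toward the center: as long as the distribution $(p_1,\ldots,p_{d-1})$ is not the balanced central one defining $T'_{n,d}$, one can move a pendent vertex from $v_i$ to $v_{i+1}$ (or $v_{i-1}$) — when this step is realized as a $\pi$-transformation it does not increase $\aec_3$, and because the extreme vertices $v_0,v_1,v_{d-1},v_d$ keep at least one pendent path of length realizing the diameter, moving mass toward the middle preserves $d={\rm diam}$. Iterating, we arrive at $T'_{n,d}$, giving $\aec_3(T)\ge \aec_3(T'_{n,d})$.

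The delicate point is the interplay between the two constraints ``$\pi$-transformation decreases (or fixes) $\aec_3$'' and ``${\rm diam}$ must stay exactly $d$''. In the earlier theorems no diameter constraint was present, so the transformations could be applied freely; here I would have to argue carefully that whenever $T$ differs from $T'_{n,d}$ there is a $\pi$-transformation (or $\pi^{-1}$-transformation, depending on the direction needed) available that respects both the diameter and the order, and whose net effect is the claimed inequality. The subtle sub-case is when a subtree $T_i$ near an end of $Q$ (say small $i$) has maximal allowed depth $i$: collapsing it must be done without creating a longer diametral path, and sliding its mass inward must not reduce the diameter below $d$ — this is where one uses that $v_0$ retains the path $v_0v_1\ldots v_i$ and that the opposite arm $v_i\ldots v_d$ is long enough. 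A clean way to handle this is to first argue that $T'_{n,d}$ is the unique tree of order $n$ and diameter $d$ that is stable under all legal $\pi$- and $\pi^{-1}$-moves, and then invoke a termination/monotonicity argument: $\aec_3$ is integer-scaled by $n$, bounded, and strictly decreases at each genuine step, so the process terminates at $T'_{n,d}$. I expect the main obstacle to be precisely the bookkeeping that keeps the diameter pinned at $d$ throughout both phases, together with identifying $T'_{n,d}$ as the unique stable endpoint — the $\aec_3$-monotonicity itself is immediate from Theorem~\ref{alpha-trans-not-decrease} and Corollary~\ref{inverse-alpha-trans}.
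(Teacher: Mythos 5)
Your proposal follows essentially the same route as the paper's proof: fix a diametral path $v_0v_1\ldots v_d$, observe that each off-path subtree $T_i$ rooted at $v_i$ has depth at most $\min\{i,d-i\}$, collapse each $T_i$ to a star at $v_i$ by $\pi$-transformations to reach $T_{n,d}(p_1,\ldots,p_{d-1})$, and then slide the pendent vertices toward the center to reach $T'_{n,d}$, with $\aec_3$ non-increasing throughout by Theorem~\ref{alpha-trans-not-decrease}. The extra care you devote to preserving the diameter and to a uniqueness-of-stable-endpoint argument goes beyond what the paper writes down, but the core argument is the same.
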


\begin{proof}
Let $T$ be a tree as stated and let $P = v_{0}v_{1}\ldots v_{d}$ be a longest path in $T$. Since $P$ is a longest path in $T$, both $v_{0}$ and $v_{d}$ are leaves of $T$. For $i\in [d-1]$ let $T_i$ be a maximal subtree of $T$ that contains $v_i$ but no other vertex of $P$. Consider $T_i$ as a rooted tree with the root $v_i$. Then the depth of the rooted tree $T_{i}$ is at most the minimum of the lengths of the $v_0,v_i$-subpath of $P$ and the $v_i,v_d$-subpath of $P$, that is, at most $\min\{i, d-i\}$.  Therefore, for each $i\in [d-1]$, we can repeatedly apply the $\pi$-transformation on the subtree $T_{i}$ respected to $T$ so than $T_{i}$ turns into a star rooted at $v_{i}$. The average Steiner $3$-eccentricity has not increased along the way. After this procedure is over, $T_{n,d}(p_{1},\ldots, p_{d-1})$ is constructed. Afterwards we repeatedly apply the $\pi$-transformation on each pendent vertex attached to $v_i$ for each $i\in [d-1]$, to arrive at $T'_{n,d}$.
\end{proof}

Note that if $d$ is odd, then we could define $T'_{n,d}$ also by arbitrary distributing the $n-d-1$ vertices that are attached to $v_{\lceil d/2\rceil}$ and to $v_{\lceil d/2\rceil}$. That is, any such tree can serve for the lower bound of Theorem~\ref{lower-bound-on-given-daameter}.

If the center of a tree $T$ contains only one vertex, then ${\rm diam}(T) = 2\,{\rm rad}(T)$, and if the center of $T$ consists of two vertices, ${\rm diam}(T) = 2\,{\rm rad}(T)-1$. Hence Theorem~\ref{lower-bound-on-given-daameter} yields the following consequence.

\begin{corollary}\label{lower-bound-on-given-reaius}
If $T$ is a tree of order $n$ and $r = {\rm rad}(G)$, then $\aec_{3}(T)\geq \aec_{3}(T'_{n,2r-1})$.
\end{corollary}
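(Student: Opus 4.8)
The plan is to deduce Corollary~\ref{lower-bound-on-given-reaius} directly from Theorem~\ref{lower-bound-on-given-daameter} by relating the radius of a tree to its diameter. First I would recall the standard fact, stated just before the corollary in the excerpt, that the center of a tree $T$ is either a single vertex or a pair of adjacent vertices, and consequently either ${\rm diam}(T) = 2\,{\rm rad}(T)$ (one-vertex center) or ${\rm diam}(T) = 2\,{\rm rad}(T) - 1$ (two-vertex center). In both cases ${\rm diam}(T) \ge 2r - 1$, where $r = {\rm rad}(T)$.

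Next I would like to invoke Theorem~\ref{lower-bound-on-given-daameter}: with $d = {\rm diam}(T)$ we already have $\aec_{3}(T) \ge \aec_{3}(T'_{n,d})$. The task then reduces to comparing $\aec_{3}(T'_{n,d})$ with $\aec_{3}(T'_{n,2r-1})$ when $d \in \{2r-1, 2r\}$. If $d = 2r-1$ there is nothing to prove. If $d = 2r$, I would argue that the caterpillar-like tree $T'_{n,2r-1}$ (a path of length $2r-1$ with all remaining vertices hung at its centre) is obtained from $T'_{n,2r}$ by a sequence of $\pi$-transformations, each of which does not increase the average Steiner $3$-eccentricity by Theorem~\ref{alpha-trans-not-decrease}. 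Concretely, in $T'_{n,2r}$ the bundle of $n-2r-1$ pendent vertices sits at the midpoint $v_{r}$ of a path of length $2r$; shortening the path by one edge and moving the two free ends appropriately is exactly a $\pi$-move on the long pendent path, and the pendent-vertex bundle can then be re-seated at the new centre by further $\pi$-moves, as in the proof of Theorem~\ref{lower-bound-on-given-daameter}. Chaining the inequalities gives $\aec_{3}(T) \ge \aec_{3}(T'_{n,2r}) \ge \aec_{3}(T'_{n,2r-1})$, which is the claim.

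The main obstacle I anticipate is the bookkeeping in the $d = 2r$ case: one must check that the required moves genuinely fit the hypotheses of the $\pi$-transformation (the pendent path being shortened has length strictly less than the eccentricity of the branching vertex into the rest of the tree, and the eccentricity comparison $\ec_{T_0}(P_0) \le \ec_{2}(w,P)$ etc.\ needed for Theorem~\ref{alpha-trans-not-decrease} to apply), and that the end state is indeed the canonical tree $T'_{n,2r-1}$ rather than some other diameter-$(2r-1)$ caterpillar. Here the remark following Theorem~\ref{lower-bound-on-given-daameter} is useful: when $d$ is odd the extremal tree $T'_{n,d}$ is only determined up to the arbitrary split of the $n-d-1$ pendent vertices between $v_{\lceil d/2\rceil}$ and its neighbour, and any such split serves as a valid lower bound, so we need not worry about which particular split the transformation lands on. Modulo these routine verifications, the corollary follows at once.
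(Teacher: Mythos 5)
Your overall route is the same as the paper's: the paper derives the corollary in one line from the diameter--radius relation for trees together with Theorem~\ref{lower-bound-on-given-daameter}, and you correctly isolate the only point needing care, namely the case $d=2r$, where one must additionally verify $\aec_{3}(T'_{n,2r})\ge \aec_{3}(T'_{n,2r-1})$ (a step the paper glosses over entirely). One caution on your mechanism for that step: a $\pi$-move whose pendant path $P$ is the long arm of $T'_{n,2r}$ is inadmissible, since then $m(P)=r$ while $\ec_{2}(u,T_{0})\le r-1$ for every neighbour $u$ of the centre $v_{r}$, so the hypothesis $m(P)<\ec_{2}(u,T_{0})$ fails; taking $P$ to be a single pendent edge of the bundle likewise fails when $r=2$. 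The move that does work is the degenerate case $m(P)=0$ explicitly allowed in the definition of the transformation: take $w=v_{r}$, $u=v_{r-1}$, $T_{0}=\{v_{0},\dots,v_{r-1}\}$ (so $\ec_{2}(u,T_{0})=r-1>0$), and let $T_{1}$ consist of the opposite arm together with the entire pendent bundle; a single application of $\pi$ reattaches all of these at $v_{r-1}$, turns $v_{r}$ into a pendent vertex of $v_{r-1}$, and lands exactly on a permitted variant of $T'_{n,2r-1}$ (all $n-2r$ pendent vertices at one central position, which suffices by the remark following Theorem~\ref{lower-bound-on-given-daameter}), with no further re-seating needed. With that one correction your argument is complete and, if anything, more careful than the paper's.
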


\section{Concluding remarks}
\label{conclusions}

We have derived several lower and upper bounds for the average Steiner $3$-eccentricity on a tree with different constrained parameters. It would be interesting to see if and how these bounds extend to $k\geq 4$.

Just a little research has been done by now on the (average) Steiner $k$-eccentricity for $k\geq 3$. Hence a lot of work still has to be done.

\frenchspacing


\begin{thebibliography}{99}

\bibitem{Ilic2020OptimalAF}
  A.~Ili\'{c},
  Optimal algorithm for computing Steiner 3-eccentricities of trees,
  or arXiv:2008.09299v2 [cs.DS] ( 21 Feb 2021). 

\bibitem{Anand2012}
  B.~S.~Anand, M.~Changat, S. Klav\v{z}ar, I.~Peterin,
  Convex sets in lexicographic product of graphs,
  Graphs Combin.\ 28 (2012) 77--84.

\bibitem{Beineke1996}
  L.~W.~Beineke, O.~R.~Oellermann, R.~E.~Pippert,
  On the Steiner median of a tree,
  Discrete Appl.\ Math.\ 68 (1996) 249--258.

\bibitem{casablanca-2019}
  R.~M.~Casablanca, P.~Dankelmann,
  Distance and eccentric sequences to bound the {W}iener index, {H}osoya polynomial and the average eccentricity in the strong products of graphs,
  Discrete Appl.\ Math.\ 263 (2019) 105--117.

\bibitem{Chartrand1989}
  G.~Chartrand, O.~R.~Oellermann, S.~Tian, H.~B.~Zou,
  Steiner distance in graphs,
  \v{C}asopis P\v{e}st.\ Mat.\ 114 (1989) 399--410.

\bibitem{Dankelmann1997}
  P.~Dankelmann, H.~C.~Swart, O.~R.~Oellermann,
  The average Steiner distance of graphs with prescribed properties,
  Discrete Appl.\ Math.\ 79 (1997) 91--103.

\bibitem{Dankelmann1996}
  P.~Dankelmann, O.~R.~Oellermann, H.~C.~Swart,
  The average Steiner distance of a graph,
  J.\ Graph Theory 22 (1996) 15--22.

\bibitem{dankelmann2004}
  P.~Dankelmann, W.~Goddard, H.~C.~Swart,
  The average eccentricity of a graph and its subgraphs,
  Util.\ Math.\ 65 (2004) 41--51.

\bibitem{dankelmann2014}
  P.~ Dankelmann, S.~Mukwembi,
  Upper bounds on the average eccentricity,
  Discrete Appl.\ Math.\ 167 (2014) 72--79.

\bibitem{dankelmann2019}
  P.~ Dankelmann, F.~J.~Osaye, S.~Mukwembi, B.~G.~Rodrigues,
  Upper bounds on the average eccentricity of {$K_3$}-free and {$C_4$}-free graphs,
  Discrete Appl.\ Math.\ 270 (2019) 106--114.

\bibitem{Garey1979}
  M.~R.~Garey, D.~S.~Johnson,
  Computers and Intractability: A Guide to the Theory of NP-Completeness,
  Freeman \& Company, New York, 1979.

\bibitem{ghorbani}
  M.~Ghorbani, X.~Li, H.~R.~Maimani, Y.~Mao, S.~Rahmani, M~R.~Parsa,
  Steiner (revised) Szeged index of graphs,
  MATCH Commun.\ Math.\ Comput.\ Chem.\ 82 (2019) 733--742.

\bibitem{Gologranc}
  T.~Gologranc,
  Steiner convex sets and Cartesian product,
  Bull.\ Malays.\ Math.\ Sci.\ Soc.\ 41 (2018) 627--636.

\bibitem{GutmanInPress}
  I.~Gutman,  X.~ Li, Y.~ Mao,
  Inverse problem on the Steiner Wiener index,
  Discuss.\ Math.\ Graph Theory 38 (2017) 83--95.

\bibitem{Gutman2016}
  I.~Gutman, On Steiner degree distance of trees,
  Appl.\ Math.\ Comput.\ 283 (2016) 163--167.

\bibitem{Gutman2015}
  I.~Gutman, B.~Furtula, X.~Li,
  Multicenter Wiener indices and their applications,
  J.\ Serb.\ Chem.\ Soc.\ 80 (2015) 1009--1017.

\bibitem{Hwang1992}
  F.~K.~Hwang, D.~S.~Richards, P.~Winter,
  The Steiner Tree Problem,
  North-Holland, Amsterdam, 1992.

\bibitem{Li2016Wiener}
  X.~Li, Y.~Mao, I.~Gutman,
  The Steiner Wiener index of a graph,
  Discuss.\ Math.\ Graph Theory 36 (2016) 455--465.

\bibitem{Li2018Wiener}
  X.~Li, Y.~Mao, I.~Gutman,
  Inverse problem on the Steiner Wiener index,
  Discuss.\ Math.\ Graph Theory 38 (2018) 83--95.

\bibitem{Li2020TheS}
  X.~Li, G.~Yu, S.~Klavv\v{z}ar, J.~Hu and B.~Li,
  The Steiner $k$-eccentricity on trees,
  arXiv:2008.07763 [math.CO]  (18 Aug 2020).
  
\bibitem{lu}
  L.~Lu, Q.~Huang, J.~Hou, X.~Chen,
  A sharp lower bound on the Steiner Wiener index for trees with given diameter,
  Discrete Math.\ 341 (2018) 723--731.

\bibitem{Mao2017Survey}
  Y.~Mao, Steiner distance in graphs---a survey,
  arXiv:1708.05779 [math.CO]  (18 Aug 2017).

\bibitem{mao-2017}
  Y.~Mao,
  Steiner Harary index,
  Kragujevac J.\ Math.\ 42 (2018) 29--39.

\bibitem{mao-2018}
  Y.~Mao, E.~Cheng, Z.~Wang,
  Steiner distance in product networks,
   Discrete Math.\ Theor.\ Comput.\ Sci.\ 20 (2018) paper 8, 25 pp.

\bibitem{mao-2018b}
  Y.~Mao, K.~C.~Das,
  Steiner Gutman index,
  MATCH Commun.\ Math.\ Comput.\ Chem.\ 79 (2018) 779--794.

\bibitem{pei2019}
   L.-D.~Pei, X.-F.~Pan, J.~Tian G.-Q.~Peng,
  On {A}uto{G}raphi{X} conjecture regarding domination number and average eccentricity,
  Filomat 33 (2019) 699--710.

\bibitem{Tratnik2019}
  N.~Tratnik, On the Steiner hyper-Wiener index of a graph,
  Appl.\ Math.\ Comput.\ 337 (2019) 360--371.

\bibitem{Wang2017}
  Z.~Wang, Y.~Mao, C.~Melekian, E.~Cheng,
  Steiner Distance in join, corona, cluster, and threshold graphs,
  J.\ Inform.\ Sci.\ Eng.\ 35 (2019) 721--735.

\bibitem{Wang2018}
  C.~Wang, C.~Ye, S.~Zhang, Z.~Wang, H.~Li,
  The Steiner Wiener index of trees with given diameter,
  Util.\ Math.\ 108 (2018) 45--52.

\bibitem{zhang-2019}
  J.~Zhang, G.-J.~Zhang, H.~Wang, X.~D.~Zhang,
  Extremal trees with respect to the {S}teiner {W}iener index,
  Discrete Math.\ Algorithms Appl.\ 11 (2019) paper 1950067, 16 pp.

\end{thebibliography}
\end{document}